\numberwithin{equation}{section}
\def\Re{{\rm Re}}
\def\p{\partial}
\def\o{\overline}
\def\b{\bar}
\def\mb{\mathbb}
\def\mc{\mathcal}
\def\n{\nabla}
\def\wt{\widetilde}
\theoremstyle{plain}
\newtheorem{thm}{Theorem}[section]
\newtheorem{prop}[thm]{Proposition}
\newtheorem{cor}[thm]{Corollary}
\theoremstyle{definition}
\newtheorem{rem}[thm]{Remark}
\theoremstyle{definition}
\newcommand{\comment}[1]{}
\newenvironment{aligns}{\equation\aligned}{\endaligned\endequation}
\begin{document}

\title{Convexity of energy function associated to the harmonic maps between surfaces}
\author{InKang Kim}
\author{Xueyuan Wan}
\author{Genkai Zhang}

\address{Inkang Kim: School of Mathematics, KIAS, Heogiro 85, Dongdaemun-gu Seoul, 02455, Republic of Korea}
\email{inkang@kias.re.kr}

\address{Xueyuan Wan: School of Mathematics, Korea Institute for Advanced Study, Seoul 02455, Republic of Korea}
\email{xwan@kias.re.kr}

\address{Genkai Zhang: Mathematical Sciences, Chalmers University of Technology and Mathematical Sciences, G\"oteborg University, SE-41296 G\"oteborg, Sweden}
\email{genkai@chalmers.se}

\begin{abstract}
 For a fixed smooth map $u_0$ between two Riemann surfaces $\Sigma$ and $S$ with non-zero degree, we consider the energy function  on Teichm\"uller space $\mc{T}$ of $\Sigma$  that assigns to a complex structure $t\in \mc{T}$ on $\Sigma$ the energy of the harmonic map $u_t:\Sigma_t:=(\Sigma,t) \to S$ homotopic to  $u_0$. We prove that the energy function is convex at its critical points. If $t_0\in\mc{T}$ is a critical point  such that  $du_{t_0}$   is never zero, then the energy function is strictly convex at this  point. As an application, in  the case that $u_0$ is a covering map, we prove that there exists a unique critical point $t_0\in \mc{T}$ minimizing the energy function.  Moreover, the energy density satisfies  $\frac{1}{2}|du|^2(t_0)\equiv 1$ and the Hessian of the energy function is positive definite at this point.

 \end{abstract}
\footnotetext[1]{2000 {\sl{Mathematics Subject Classification.}}
53C43, 53C21, 53C25}
  \footnotetext[2]{{\sl{Key words and phrases.}} Harmonic map, Teichm\"uller space, energy function, convexity.}
\footnotetext[3]{\sl{Research by Inkang Kim is partially supported by Grant NRF-2019R1A2C1083865 and  research by Genkai Zhang is
  partially supported by Swedish
  Research Council (VR).}}
\maketitle
\tableofcontents

\section*{Introduction}

The energy function on Teichm\"uller space has been widely studied  for the past few decades. In particular, the (strict) convexity and plurisubharmonicity of energy function play a very important role in   Teichm\"uller theory. The convex property of the energy function of harmonic maps on a surface
can be used
to prove that  the Teichm\"uller space
is a cell \cite[Section 3.3]{Tromba},  and the plurisubharmonicity of the energy function of harmonic maps implies that it is a complex Stein manifold
\cite{Bers}, \cite[Section 6]{Wolpert}, \cite[Theorem 6.1.1]{Tromba}, 
and using the (2,0)-part of the pullback of the metric via harmonic maps, it is shown to be homeomorphic to the space of holomorphic quadratic differentials \cite[Theorem 3.1]{Wolf0}.
In this paper, we will continue to consider the (strict) convexity of energy function associated to the harmonic maps between different surfaces. 

For a fixed Riemannian manifold $M$ and a
smooth map (non-constant) $u_0$ from $M$ to a Riemann surface $\Sigma$,
we consider the harmonic maps $u:M\to\Sigma$ in the homotopy class $[u_0]$
and the corresponding  energy $E(u)$
viewed as  a function
 on Teichm\"uller space of 
$\Sigma$. The energy function 
has some  remarkable properties,
such as strict convexity along any Weil-Petersson geodesic
\cite[Theorem 3.1.1]{Yamada},
and the logarithmic strict plurisubharmonicity \cite[Theorem 0.1]{KWZ}.
In particular the energy function has a unique critical point (minimum point) if the induced map between $\pi_1(M)$ and $\pi_1(\Sigma)$ is surjective \cite[Theorem 3.2.1]{Yamada}.
On the other hand we can also consider
the energy function on Teichm\"uller space of $\Sigma$ that assigns to a complex structure on $\Sigma$ the energy of the harmonic map $u:\Sigma \to N$ homotopic to a fixed smooth map $u_0$, where $N$ is a fixed Riemannian manifold.  If $N$ is also a negatively curved Riemann surface and the homotopy class consists of homotopy equivalences,  Tromba \cite[Theorem 6.2.6]{Tromba} showed that this energy function is strictly plurisubharmonic. When $N$ has non-positive Hermitian sectional curvature, Toledo \cite[Theorem 1]{Toledo} proved that the energy function is also plurisubharmonic.
In \cite[Theorem 0.1]{KWZ1}, we proved the reciprocal energy function is plurisuperharmonic, and thus the logarithm of energy function is  plurisubharmonic. 
When  $N$ is $\Sigma$ as a surface
with a fixed complex structure and  $u_0$ is the identity map,
then the energy function has  a unique critical point,
the second derivative of energy function is exactly given by the Weil-Petersson metric \cite[Theorem 3.1.3]{Tromba}. For a general $N$, if  the map induced by $u_0$ between $\pi_1(\Sigma)$ and $\pi_1(N)$ is injective, then there exists a critical point
which  minimizes the energy function, but the uniqueness within the
    homotopy class of $u_0$ fails in general; see \cite{SU1}. 
    
    In this paper, we focus on the case when the target manifold  $N=S$
    is also a fixed Riemann surface with genus $g_S\geq 2$ and
    consider the (strict) convexity of energy function and the uniqueness of the critical point. Our main theorem is 
    \begin{thm}\label{thm0.1}
    Let $u_0:\Sigma\to S$ be a smooth map with non-zero degree, and let $E(t)$ be the associated energy function on the Teichm\"uller space $\mc{T}$ of $\Sigma$. If $t_0\in\mc{T}$ is a critical point of $E(t)$, then the energy function is convex at this point. If moreover the associated harmonic  map $u_{t_0}$ satisfies that  $du_{t_0}$ is never zero, then the energy function is strictly convex at $t_0\in\mc{T}$.
    \end{thm}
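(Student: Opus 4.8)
The plan is to compute the second variation of $E(t)$ along an arbitrary direction in $\mc{T}$ at the critical point $t_0$, and to show this Hessian is non-negative (strictly positive under the nonvanishing hypothesis). The natural framework is the one used by Tromba and Yamada: a tangent vector to $\mc{T}$ at $t_0$ is represented by a harmonic Beltrami differential $\mu$, equivalently by a holomorphic quadratic differential $\phi$ via $\mu = \bar\phi/\rho$ where $\rho\,|dz|^2$ is the hyperbolic metric on $\Sigma_{t_0}$. I would fix a one-parameter family of complex structures $t(s)$ with $\dot t(0)=\mu$, let $u_s:\Sigma_{t(s)}\to S$ be the associated harmonic map, and write $E(s)=E(u_s)=\int_{\Sigma} e(u_s)\, dV_{t(s)}$. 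The first variation vanishes at $s=0$ precisely because $t_0$ is critical; the content is in the second variation.

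First I would set up coordinates: use the decomposition of $du$ into its holomorphic and antiholomorphic parts, writing $\|\p u\|^2 = \mc{H}$ and $\|\bar\p u\|^2 = \mc{L}$ relative to the fixed hyperbolic metric $\sigma$ on $S$ and the varying conformal structure on $\Sigma$. The energy density is $\mc{H}+\mc{L}$ and the Jacobian (times degree-type quantity) is $\mc{H}-\mc{L}$, whose integral $\int(\mc{H}-\mc{L})\,dV$ is a topological constant equal (up to normalization) to the degree of $u_0$ — this is the key rigidity. Hence $E(s) = \int (\mc{H}+\mc{L})\,dV = 2\int \mc{L}\,dV + \text{const}$, so it suffices to prove that $s\mapsto \int \mc{L}\,dV$ is convex at $s=0$. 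The Bochner/Eells–Sampson identity for harmonic maps between surfaces gives $\Delta \log \mc{L} = 2\mc{L} - 2\mc{H} + 2K_\Sigma$ on the set where $\mc{L}>0$ (with the analogous equation for $\mc{H}$), which controls how $\mc{L}$ responds to the deformation of the domain metric.

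Then I would differentiate twice. The deformation of the domain complex structure by the harmonic Beltrami differential changes the relevant $\bar\p$-operator, and the harmonic map equation forces $u_s$ to adjust; differentiating the harmonic map equation once gives a Jacobi-type field $V = \dot u$ along $u_0$ satisfying a linear elliptic equation $\mc{J}_u V = (\text{source involving } \mu \text{ and } du)$, where $\mc{J}_u = \Delta - \text{tr}(R^S(du,\cdot)du)$ is the Jacobi operator, which is non-negative (indeed invertible when $du$ is generically injective) because $S$ has negative curvature. The second derivative of $\int \mc{L}\,dV$ then splits into (i) an explicit quadratic term in $\mu$ coming from varying the domain metric with $u$ frozen — this should be manifestly non-negative, essentially $\int |\mu|^2 \mc{L}$-type — plus (ii) a term of the form $\langle \mc{J}_u^{-1}(\text{source}), \text{source}\rangle \ge 0$ by non-negativity of the Jacobi operator. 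Adding the two non-negative contributions yields convexity; strict convexity follows when $du_{t_0}$ is nowhere zero, since then $\mc{H}, \mc{L}$ are both positive, the first term is strictly positive for $\mu\neq 0$, and the Bochner equation applies globally without the complication of the zero set of $du$.

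The main obstacle I expect is handling the zero set of $du_{t_0}$ in the merely-convex (non-strict) case: the Bochner identity for $\log\mc{L}$ degenerates where $\mc{L}=0$, and the second-variation computation of $\int\mc{L}\,dV$ involves quantities like $\Delta\log\mc{L}$ that are only distributionally controlled near those zeros. The standard remedy is to note that zeros of $\p u$ and $\bar\p u$ are isolated (since they are zeros of holomorphic, resp. related, sections off a measure-zero set, by Sampson's results on harmonic maps between surfaces), to do the computation on $\Sigma$ minus small disks around these zeros, and to show the boundary integrals vanish in the limit — this requires a careful estimate of the blow-up rate of $\log\mc{L}$ and of the Jacobi field $V$ near the branch points. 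A secondary technical point is justifying that the family $u_s$ is smooth in $s$ (including near branch points) so that $\dot u$ and $\ddot u$ are legitimate, which follows from the implicit function theorem applied to the harmonic map operator together with the invertibility of a suitable projected Jacobi operator; I would invoke the standard Eells–Sampson/Sampson regularity theory for this.
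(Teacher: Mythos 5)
There is a genuine gap, and it is at the decisive step. When you write the Hessian of $E$ as ``(i) an explicit quadratic term in $\mu$ with $u$ frozen \emph{plus} (ii) a term $\langle \mathcal{J}_u^{-1}(\mathrm{source}),\mathrm{source}\rangle\ge 0$,'' you have the sign of (ii) wrong. Since for each fixed complex structure the harmonic map is the energy \emph{minimizer} in its homotopy class, $E(t)=\min_u F(t,u)$ and the second derivative of the minimized value is a Schur complement: the frozen-metric term \emph{minus} the correction coming from the adjustment $\dot u$ of the map, i.e.
\begin{align*}
\tfrac{1}{2}\,\frac{d^2E}{dt^2}\Big|_{t=0}
=\int_{\Sigma}|du|^2\frac{|q|^2}{\lambda_0^4}\,d\mu_{g_0}
-\big\langle \mathcal{J}(\dot u),\dot u\big\rangle ,
\end{align*}
with $\mathcal{J}$ semi-positive, so the Jacobi-operator term is a \emph{negative} contribution (this is exactly formula \eqref{2.22} of the paper). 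Hence the Hessian is a difference of two non-negative quantities, and ``adding two non-negative contributions'' proves nothing; the entire content of the theorem is to show that at a critical point the subtracted term does not exceed the explicit term. The paper does this by first using criticality to conclude that $u_{t_0}$ is weakly conformal, hence $\pm$ holomorphic (vanishing of the Hopf differential) --- a structural fact your proposal never invokes --- then comparing $\mathcal{J}^{-1}$ with $(\Delta^{0,1})^{-1}$ via semi-positivity of the curvature part $\mathcal{R}$, using Hodge theory to identify $\langle s,(\Delta^{0,1})^{-1}s\rangle=\|i_\mu du\|^2-\|\mathbf{H}(i_\mu du)\|^2$ with $\|i_\mu du\|^2$ equal to the explicit term, and exploiting holomorphicity through a type decomposition of $\mathcal{J}^{-1}(s)$ to show that the conjugate cross-pairing vanishes and $\mathcal{R}$ annihilates it, leaving $E''(0)=4\|\mathbf{H}(i_\mu du)\|^2\ge 0$.

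Two further consequences of the gap: your strict-convexity argument (``the first term is strictly positive for $\mu\neq 0$'') collapses once the sign is corrected --- in the paper strict convexity requires showing $\mathbf{H}(i_\mu du)\neq 0$ when $du$ is nowhere zero, which is done by contradiction: $\mathbf{H}(i_\mu du)=0$ would make the harmonic Beltrami differential $\mu$ equal to $\bar\p Y$ for a vector field $Y$, forcing $q\equiv 0$. And your technical worry about the zero set of $du$ and the Bochner identity for $\log\mathcal{L}$ is misdirected: at a critical point $u$ is $\pm$ holomorphic, so one of $\mathcal{H},\mathcal{L}$ vanishes \emph{identically}, the identity $\Delta\log\mathcal{L}=2\mathcal{L}-2\mathcal{H}+2K_\Sigma$ is vacuous there, and it plays no role in the actual second-variation computation.
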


\begin{cor}\label{cor0.1}
	Suppose $u_0: \Sigma\to S$ is a covering map, then there exists a unique critical point $t_0\in \mc{T}$ minimizing the energy function $E(t)$.  Moreover, the energy density $\frac{1}{2}|du|^2(t_0)\equiv 1$ and the Hessian of the energy function is positive definite at this point.  Indeed, the hyperbolic structure on $\Sigma$ is the pull-back of the hyperbolic metric on $S$ via $u_{t_0}$.
\end{cor}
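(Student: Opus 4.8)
The plan is to reduce ``$t$ is a critical point of $E$'' to a geometric statement about $u_t$, to exploit that $u_0$ is an \emph{unramified} cover through Riemann--Hurwitz, and then to read off every assertion of the corollary — including the Hessian statement — from Theorem~\ref{thm0.1}.

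First I would record the dictionary. As is classical (and as enters the proof of Theorem~\ref{thm0.1}), the differential of $E$ at $t$ is, up to a fixed nonzero constant, the Hopf differential $\Phi_t:=(u_t^{\ast}g_S)^{(2,0)}$ of $u_t$; hence $t$ is a critical point iff $\Phi_t\equiv 0$, i.e.\ iff $u_t$ is conformal, that is, holomorphic or anti-holomorphic. On the other hand, writing $e(u)=\tfrac12|du|^2$ and $J_u$ for the Jacobian of $u$ relative to the area forms, the pointwise inequality $e(u)\ge |J_u|$ integrates to
\[
E(t)=\int_\Sigma e(u_t)\,dA_{\Sigma_t}\ \ge\ \Big|\int_\Sigma u_t^{\ast}dA_S\Big|=|\deg u_0|\cdot\operatorname{Area}(S,g_S)=|\deg u_0|\cdot 2\pi(2g_S-2),
\]
with equality exactly when $u_t$ is $\pm$-holomorphic. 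So the critical set of $E$ equals its set of global minima, all of common value $|\deg u_0|\cdot 2\pi(2g_S-2)$.

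Next I would use the covering hypothesis. Set $d=|\deg u_0|$ and, replacing ``holomorphic'' by ``anti-holomorphic'' if $\deg u_0<0$, assume $\deg u_0=d>0$; then at a critical point $t$ the map $u_t$ is a degree-$d$ holomorphic branched cover. Since $u_0$ is an honest covering, Riemann--Hurwitz gives $2g_\Sigma-2=d(2g_S-2)$, and applying Riemann--Hurwitz to $u_t$ forces its ramification to vanish, so $u_t$ is an unramified holomorphic covering, in particular a local biholomorphism, so $du_t$ is nowhere zero — hence Theorem~\ref{thm0.1} applies at $t$ in its strict form. For existence, note that $h_0:=u_0^{\ast}g_S$ is a smooth metric on $\Sigma$ of constant curvature $-1$ and that $u_0\colon(\Sigma,h_0)\to(S,g_S)$ is a local isometry, hence totally geodesic, hence harmonic; since harmonicity of surface maps is conformally invariant and harmonic maps into the negatively curved $S$ are unique in the homotopy class $[u_0]$ (nontrivial, as $\deg u_0\ne0$), the point $t_0\in\mc{T}$ represented by the conformal class of $h_0$ satisfies $u_{t_0}=u_0$ and $\Phi_{t_0}=0$; so $t_0$ is critical, $h_0$ is the hyperbolic metric of $\Sigma_{t_0}$, one has $\tfrac12|du_{t_0}|^2\equiv 1$ since $u_0$ is a local isometry, and $E(t_0)=\int_\Sigma dA_{h_0}=d\cdot 2\pi(2g_S-2)$ meets the lower bound, so $t_0$ is a global minimum.

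Finally, uniqueness and the Hessian. If $t_1$ is any critical point then, by the above, $u_{t_1}$ is an unramified holomorphic covering of $S$ homotopic to $u_0$; homotopic covering maps determine conjugate subgroups of $\pi_1(S)$, hence the same cover up to isomorphism, and a holomorphic covering carries the unique complex structure making it holomorphic, so — matching markings via the homotopy to $u_0$ — the marked Riemann surface $\Sigma_{t_1}$ coincides with $\Sigma_{t_0}$, i.e.\ $t_1=t_0$ in $\mc{T}$. (Alternatively one first proves $E$ is proper — pinching an essential simple closed curve on $\Sigma_t$ forces $E(t)\to\infty$ because $u_t$ is surjective — so that $E$ attains a minimum, and then, every critical point being a non-degenerate local minimum by the previous paragraph and the strict part of Theorem~\ref{thm0.1}, a mountain-pass argument excludes a second critical point.) Since $du_{t_0}$ is nowhere zero, Theorem~\ref{thm0.1} gives that $E$ is strictly convex at $t_0$, so its Hessian there is positive definite. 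The genuine work is the first-variation dictionary $dE\propto\Phi$ together with the degree-sign and ramification bookkeeping, and — for the global uniqueness — either the covering-space rigidity or the properness of $E$; this last step is where $\deg u_0\neq 0$ is used in an essential way.
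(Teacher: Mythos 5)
Your proposal is correct, but it reaches the corollary by a genuinely different route than the paper for three of the four assertions. The paper gets \emph{existence} of a minimizer abstractly, from properness of $E$ (Sacks--Uhlenbeck, Schoen--Yau), then proves $\tfrac12|du|^2(t_0)\equiv 1$ by a Bochner-type identity for $\log$ of the energy density plus the maximum principle together with the computation $E(t_0)=\text{Area}(\Sigma)$, and finally gets \emph{uniqueness} from strict convexity at every critical point (your same Riemann--Hurwitz bookkeeping) combined with a Morse-theoretic lemma of Cheeger--Ebin on the connected space $\mc{T}$; it also computes the Hessian exactly, $\frac{d^2E}{dt^2}=4\|\mu\|^2_{\text{WP}}$, by observing that density $\equiv1$ makes $i_\mu du$ harmonic. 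You instead construct the critical point explicitly as the conformal class of $h_0=u_0^*g_S$, for which $u_{t_0}=u_0$ is a local isometry (this gives the density and pull-back statements for free, with no maximum principle), identify critical points with global minima via the pointwise bound $e(u)\ge|J_u|$ (which the paper only records as a remark), and prove uniqueness by covering-space rigidity: any critical $u_{t_1}$ is an unramified $\pm$-holomorphic covering homotopic to $u_0$, hence equivalent to $u_{t_0}$ by a covering equivalence which, being a composition of local biholomorphisms, is a biholomorphism. Two small points deserve a sentence each in a final write-up: in the marking step you must note that the covering equivalence $h$ with $u_{t_1}\circ h=u_{t_0}$ can be chosen homotopic to the identity (lift the homotopy $u_{t_0}\simeq u_{t_1}$ through the covering $u_{t_1}$, and use equality of degrees to see the lift is a homeomorphism), since only then does $t_1=t_0$ in $\mc{T}$ rather than merely $\Sigma_{t_0}\cong\Sigma_{t_1}$; and in your parenthetical alternative, properness under pinching uses $\pi_1$-injectivity of the covering (essential curves map to essential curves), not surjectivity of $u_t$. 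With those caveats your argument is complete; what it buys is independence from the properness results and from the Cheeger--Ebin lemma, while the paper's route buys the sharper quantitative facts ($E(t_0)=\text{Area}(\Sigma)$ and the explicit Hessian $4\|\mu\|^2_{\text{WP}}$) at any abstractly obtained critical point.
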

We believe that the above corollary holds even for branched coverings with mild assumptions.

We explain briefly our method to prove Theorem \ref{thm0.1} and Corollary \ref{cor0.1}.

For any $t_0\in \mc{T}$ with some abuse of notation
let $\Gamma(t)$, $t\in (-\epsilon,\epsilon)$, be a Weil-Petersson geodesic in $\mc{T}$ passing through $\Gamma(0)=t_0$. The hyperbolic metrics associated to $\Gamma(t)$ can be expressed as
\begin{align*}
g(t)=&\lambda^2_0dzd\b{z}+t(qdz^2+\o{qdz^2})\\
&+\frac{t^2}{2}\left(\frac{2|q|^2}{\lambda_0^4}-2(\Delta-2)^{-1}\frac{2|q|^2}{\lambda_0^4}\right)\lambda^2_0 dzd\b{z}+O(t^4)
\end{align*}
for some holomorphic quadratic differential $qdz^2$ on $\Sigma_{t_0}:=(\Sigma, t_0)$; see \cite[(3.4)]{Wolf}. Here $g_0=\lambda_0^2dzd\bar{z}$ denotes the hyperbolic metric on $\Sigma_{t_0}$. Let $\rho^2dvd\bar{v}$ denote the hyperbolic metric on the fixed Riemann surface $S$. For each $t\in (-\epsilon,\epsilon)$, there exists a unique harmonic map $u_t$ from $(\Sigma, g(t))$ to $(S,\rho^2dvd\bar{v})$  in the homotopy class $[u_0]$. Then the energy function along the Weil-Petersson geodesic is 
\begin{align*}
E(t)=\frac{1}{2}\int_{\Sigma}|du_t|^2d\mu_{g(t)}. 	
\end{align*}
Thus Theorem \ref{thm0.1} would follow
if we can prove that  $d^2 E(t)/dt^2|_{t=0}\geq 0$
at a critical point $t=0$ of $E(t)$, and $
d^2 E(t)/dt^2|_{t=0} >0$ if furthermore $du_{t_0}$ is never zero. For the ease of notations, we use $u_{t_0}=u$ for the rest of the section.
The first derivative of energy function is given by 
\begin{align*}
\frac{d E(t)}{dt}|_{t=0}=-4\left\langle\left(u^*(\rho^2dvd\b{v})\right)^{2,0}, \frac{\b{q}}{\lambda_0^2}\frac{d\b{z}}{dz}\right\rangle_{QB}.	
\end{align*}
Here $\langle\cdot,\cdot\rangle_{QB}$ denotes the natural pairing between holomorphic quadratic differentials and harmonic Beltrami differentials. Thus, $t_0\in\mc{T}$ is a critical point of energy function if and only if $u^*(\rho^2dvd\b{v})\equiv 0$, i.e. $u$ is weakly conformal \cite[Theorem 4]{SU}. The second derivative of energy function is given by 
\begin{align*}
	\frac{1}{2}\frac{d^2 E(t)}{dt^2}|_{t=0}	=\int_{\Sigma}|du|^2\frac{|q|^2}{\lambda_0^4}d\mu_{g_0}-\left\langle \mc{J}\left(\frac{\p u}{\p t}\right),\frac{\p u}{\p t}\right\rangle,
\end{align*}
where $d\mu_{g_0}:=\lambda_0^2\frac{i}{2}dz\wedge d\b{z}$ and $\mc{J}$ is Jacobi operator defined in \eqref{2.21}, which is real, symmetric and semi-positive, $\langle\cdot,\cdot\rangle$ is the Hermitian inner product  on the bundle $u^*T_{\mb{C}}S$ defined by \eqref{HI1} and \eqref{HI2}. We denote by $\n$ the natural induced connection on $u^*T_{\mb{C}}S$ from the Riemann surface $(S, \rho^2dvd\bar{v})$, and $\n^{0,1}$ denotes the $(0,1)$-part of $\n$, the adjoint operator of $\n^{0,1}$ is denoted by $(\n^{0,1})^*$ and set 
$\Delta^{0,1}=\n^{0,1}(\n^{0,1})^*+(\n^{0,1})^*\n^{0,1}$, then 
\begin{align*}
\mc{J}=\Delta^{0,1}+\mc{R},	
\end{align*}
where $\mc{R}$ is a semi-positive operator. Thus 
\begin{align*}
\frac{d^2 E(t)}{dt^2}|_{t=0}\geq 4\left(\|\mb{H}(i_{\mu}du)\|^2-\text{Re}\langle (\n^{0,1})^*i_{\mu}du,\mc{J}^{-1}(\o{ (\n^{0,1})^*i_{\mu}du})\rangle\right),	
\end{align*}
	with the equality if and only if $\mc{R}(\mc{J}^{-1}(\n^{0,1})^*i_{\mu}du)=0$. Here $\mu:=\frac{\b{q}}{\lambda_0^2}\frac{d\b{z}}{dz}$ is a harmonic Beltrami differential, $\mb{H}$ denotes the harmonic projection to $\text{Ker} \Delta^{0,1}$. If $t_0\in \mc{T}$ is a critical point, then $u_{t_0}$ is (anti)-holomorphic and 
	\begin{align*}
	\mc{R}(\mc{J}^{-1}(\n^{0,1})^*i_{\mu}du)=0,\quad 	\text{Re}\langle (\n^{0,1})^*i_{\mu}du,\mc{J}^{-1}(\o{ (\n^{0,1})^*i_{\mu}du})\rangle=0.
	\end{align*}
Therefore, 
$$\frac{d^2 E(t)}{dt^2}|_{t=0}= 4\|\mb{H}(i_{\mu}du)\|^2\geq 0,$$
i.e. the energy function is convex at its critical points. If moreover, $du_{t_0}$ is never zero, then we can prove 
$\mb{H}(i_{\mu}du)\not\equiv 0$, so the energy function is strictly convex at this point. Hence we get Theorem \ref{thm0.1}. 

If $u_0:\Sigma\to S$ is a covering map, then $u_0$ is a surjective map with $\deg u_0\neq 0$, and for any $p\in \Sigma$, the induced  homomorphism $(u_0)_*: \pi_1(\Sigma, p)\to \pi_1(S, u_0(p))$ is injective. By \cite{SU1,SY}, there exists a critical point minimizing the energy function. 
 If $t_0\in\mc{T}$ is a critical point, then $u_{t_0}$ is weakly conformal and so $\pm$ holomorphic. By Riemann-Hurwitz formula, 
 \begin{align*}
\chi(\Sigma)=\deg u_{t_0}\cdot\chi(S)-\sum(d_i-1)=\deg u_0\cdot\chi(S)-\sum(d_i-1),	
\end{align*}
where $d_i\geq 1$ is the ramification index of  $u_{t_0}$; see e.g. \cite[Theorem 1.2 or (1.1)]{KLP}.
On the other hand, since $u_0$ is a covering map, so $$\chi(\Sigma)=\deg u_0 \cdot\chi(S).$$  Thus, each ramification index $d_i=1$, which implies that $du_{t_0}$ is never zero. From Theorem \ref{thm0.1}, the energy function is strictly convex at its critical points. By \cite[Lemma 4.11 (2)]{JE} and noting that $\mc{T}$ is connected, the critical point is unique and this
proves Corollary \ref{cor0.1}.

This article is organized as follows. In Section \ref{sec1}, we fix the notation and recall
some basic facts on  the energy, harmonic maps between surfaces, and Weil-Petersson geodesics on Teichm\"uller space. In Section \ref{sec2}, we will calculate the first and the second derivatives of energy function along a Weil-Petersson geodesic. In Section \ref{sec3}
we will prove the (strict) convexity of energy function at its critical point and study
the uniqueness of its critical pints,  and we prove
Theorem \ref{thm0.1} and Corollary \ref{cor0.1}.

\section{Preliminaries}\label{sec1}

In this section, we shall fix the notation and recall some basic facts on the energy, harmonic maps between surfaces, and Weil-Petersson geodesics on Teichm\"uller space. We refer \cite{DW, Jost, Jost1, Tromba, Wolf, Wolpert} for more details.   

\subsection{Energy function on Teichm\"uller space}

Let $\Sigma$ be a Riemann surface of genus $g_{\Sigma}\geq 2$, and $z=x+ i y$ denote the  local holomorphic coordinate on $\Sigma$. Denote  by 
\begin{align*}
	\lambda^2(z) dzd\b{z}:=\frac{\lambda^2}{2}(dz\otimes d\b{z}+d\b{z}\otimes dz)=\lambda^2(dx^2+dy^2)
\end{align*}
the hyperbolic metric, i.e. its curvature satisfies 
 \begin{align*}
 K:=-\frac{4}{\lambda^2}\frac{\p^2}{\p z\p\b{z}}\log\lambda=-1.	
 \end{align*}
The associated Hermitian metric is $\lambda^2 dz\otimes d\b{z}$, and the fundamental $(1,1)$-form is given by
\begin{align}\label{fund}
\omega_{\Sigma}=\frac{i}{2}\lambda^2 dz\wedge d\b{z}=\lambda^2 dx\wedge dy,
\end{align}
where $dz\wedge d\b{z}=dz\otimes d\b{z}-d\b{z}\otimes dz$.
The area of $\Sigma$ is 
\begin{align}\label{Area}
\text{Area}(\Sigma):=\int_{\Sigma}\omega_{\Sigma}=-2\pi\chi(\Sigma),
\end{align}
where the last equality holds by Gauss-Bonnet theorem,  $\chi(\Sigma)=2-2g_{\Sigma}$ is the  Euler characteristic of $\Sigma$. 

Let $S$ be also a Riemann surface of genus $g_{S}\geq 2$, and equipped with the hyperbolic metric 
\begin{align*}
\rho^2(v)dvd\b{v}=\frac{\rho^2}{2}(dv\otimes d\b{v}+d\b{v}\otimes dv),
\end{align*}
where $v$ denotes the local holomorphic coordinate on $S$. Thus
\begin{align}\label{1.3}
	\frac{4}{\rho^2}\frac{\p^2}{\p v\p\b{v}}\log \rho=1.
\end{align}
With respect to the local complex coordinates $\{v,\b{v}\}$ on $S$,
any smooth map $u:\Sigma\to S$ can be written
as $$(v, \b{v})=u(z)=(u^v,u^{\b{v}})$$
 for some local smooth functions $u^v, u^{\b{v}}$.
 The energy of $u$ is defined
 \cite[(3.6.1)]{Jost}
 by 
\begin{align}\label{energy}
E(u):=\int_{\Sigma} \rho^2(u(z))(|u^v_z|^2+|u^v_{\b{z}}|^2)\frac{i}{2}dz\wedge d\b{z},
\end{align}
where $u_z^v:=\p_z u^v$ and $u^v_{\b{z}}:=\p_{\b{z}}u^v$.
$u$ is called harmonic if  it is a critical point of the energy $E$, i.e., if  it satisfies the following harmonicity  equation:
\begin{align}\label{harmonic}
\n_{\b{z}}u^v_z:=\p_{\b{z}}u^v_z+\frac{2\rho_v}{\rho} u^v_z u^v_{\b{z}}=0,	
\end{align}
where $\n$ denotes the natural induced connection on the complexified bundle 
$$u^*T_{\mb{C}}S=:u^*(TS\otimes\mb{C})=u^*K_{S}^*\oplus \o{u^*K_{S}^*}$$
from the Chern connection of the Hermitian line bundle $(K_S^*, \rho^2dv\otimes d\b{v})$.

Fix a smooth map $u_0:\Sigma\to S$ with non-zero degree, i.e. $\deg u_0\neq 0$. Then any smooth map $u:\Sigma\to S$ in the homotopy class $[u_0]$ has the same degree $\deg u=\deg u_0\neq 0$, and so $u$ is surjective. Let $$\mc{T}=\mc{M}_{-1}/\mc{D}_0$$ be the Teichm\"uller space of $\Sigma$,
where $\mc{M}_{-1}$ is the space of all smooth Riemannian metrics on $\Sigma$ with scalar curvature $-1$ and $\mc{D}_0$
is the group of all smooth orientation preserving diffeomorphisms of $\Sigma$ in the identity homotopy class.  For the fixed Riemann surface $S$ and any  hyperbolic metric $g$ in $\mc{M}_{-1}$, since each smooth map homotopic to $u_0$ is surjective, so there exists a unique harmonic map $u(g)$ homotopy to $u_0$; see e.g. \cite[Page 158, Corollary]{ES}, \cite[Page 675, (I)]{Hart}, \cite[Thoerem 4]{Sampson} or also \cite[Theorem 3.1.1]{Tromba}. Thus the energy function 
\begin{align*}
	E(g):=E(u(g))
\end{align*}
is a smooth function on $\mc{M}_{-1}$. By the same argument as in \cite[Page 66-67]{Tromba}, 
\begin{align*}
	E(f^*g)=E(g),
\end{align*}
for any $f\in\mc{D}_0$. Thus $E$ descends to a smooth function on Teichm\"uller space $\mc{T}:=\mc{M}_{-1}/\mc{D}_0$. We denote the energy function by $E(t)$, $t\in \mc{T}$.

\subsection{Weil-Petersson geodesic}

For any $t\in \mc{T}$, denote by $\Sigma_t=(\Sigma,t)$ the associated Riemann surface with the complex structure $t\in\mc{T}$. Let $\omega_{\Sigma_t}$ be the fundamental $(1,1)$-form as in (\ref{fund}).
The tangent space $T_t\mc{T}$ is identified with the space of harmonic Beltrami differential $\mu=\mu(z)\frac{d\b{z}}{d z}$, and the $L^2$-norm defines the Weil-Petersson metric
\begin{align}\label{WP}
\|\mu\|^2_{\text{WP}}=\int_{\Sigma}|\mu(z)|^2\omega_{\Sigma_t}.	
\end{align}
The Weil-Petersson metric is K\"ahler \cite{Ahlfors},  negatively curved \cite{Tromba1, Wolpert2}, not complete \cite{Chu, Wolpert1}.  However, the
synthetic geometry is quite similar to that of a complete metric of negative curvature, and indeed  Wolpert \cite{Wolpert} showed that every pair of points can be joined by a unique Weil-Petersson geodesic, and a geodesic length function is strictly convex along a Weil-Petersson geodesic. 

Fix a point $t_0\in \mc{T}$ and  let
$g_0=\lambda_0^2 dzd\b{z}$ be
the corresponding hyperbolic metric on $\Sigma_{t_0}$.
Let $\Gamma(t)$, ($\Gamma(0)=t_0$),  be the Weil-Petersson geodesic arc with initial tangent
vector given by the harmonic Beltrami differential
$\mu=\frac{\b{q}}{\lambda_0^2}\frac{d\b{z}}{dz}$, where $qdz^2$ is a
holomorphic quadratic differential on $\Sigma_{t_0}$.
Then the
associated hyperbolic metrics on $\Sigma_t$ has the following Taylor expansion
near $t=0$ as \cite[(3.4)]{Wolf},
\begin{aligns}\label{1.1}
g(t)=&\lambda^2_0dzd\b{z}+t(qdz^2+\o{qdz^2})\\
&+\frac{t^2}{2}\left(\frac{2|q|^2}{\lambda_0^4}-2(\Delta-2)^{-1}\frac{2|q|^2}{\lambda_0^4}\right)\lambda^2_0 dzd\b{z}+O(t^4).
\end{aligns}
Here $\Delta=\frac{4}{\lambda_0^2}\frac{\p^2}{\p
  z\p\b{z}}=\frac{1}{\lambda_0^2}(\p_x^2+\p^2_y)$.
Furthermore there is
\cite[Lemma 5.1]{Wolf}
a point-wise estimate for the term involving
$(\Delta-2)^{-1}$,
\begin{align}\label{1.2}
	\alpha:=-2(\Delta-2)^{-1}\frac{|q|^2}{\lambda_0^4}\geq \frac{1}{3}\frac{|q|^2}{\lambda_0^4}.
\end{align}

\section{Variations of energy function}\label{sec2}

In this section, we will calculate the first and the second derivatives of energy function along a Weil-Petersson geodesic. 

\subsection{Energy function along Weil-Petersson geodesics}

From (\ref{1.1}) and (\ref{1.2}) the Riemannian metric $g(t)$
can be written as
 \begin{align*}
 g(t)=(dz, d\b{z})\otimes G\left(\begin{matrix}{}
 dz \\
d\b{z}	
\end{matrix}\right), 	
 \end{align*}
where $G$ is a matrix given by 
\begin{align*}
G=\left(\begin{matrix}{}
 G_{zz} & G_{z\b{z}}\\
 G_{z\b{z}} & G_{\b{z}\b{z}}	
\end{matrix}
\right)	=\left(\begin{matrix}{}
 tq  & \frac{\lambda^2_0}{2}+\frac{t^2}{2}\left(\frac{|q|^2}{\lambda^4_0}+\alpha\right)\lambda^2_0\\
 \frac{\lambda^2_0}{2}+\frac{t^2}{2}\left(\frac{|q|^2}{\lambda^4_0}+\alpha\right)\lambda^2_0 & t\o{q}   
\end{matrix}
\right)+O(t^4).
\end{align*}
Thus 
\begin{aligns}\label{vol1}
\det G &=t^2|q|^2-\left(\frac{\lambda^2_0}{2}+\frac{t^2}{2}(\frac{|q|^2}{\lambda^4_0}+\alpha)\lambda^2_0\right)^2+O(t^4)\\
&=-\frac{\lambda_0^4}{4}-\frac{t^2}{2}(\alpha\lambda_0^4-|q|^2)+O(t^4).	
\end{aligns}
The volume element $d\mu_{g(t)}$ is  given by 
\begin{align}\label{vol2}
d\mu_{g(t)}:=i\sqrt{|\det G|} dz\wedge d\b{z}.	
\end{align}
Fix a smooth map $u:\Sigma\to S$  with  $\deg u_0\neq 0$. For each $t$, we get a harmonic map $u=u_t: (\Sigma, g(t))\to (S, \rho^2dvd\b{v})$
which is homotopic to a fixed smooth map $u_0$. The energy density  is defined by
\begin{align}\label{density}
\frac{1}{2}|du|^2:=\frac{1}{2}\text{Tr}_{g(t)}(u^*(\rho^2dvd\b{v}))=\frac{1}{2}\text{Tr}(G^{-1}U),	
\end{align}
where 
\begin{align*}
du:=u^v_zdz\otimes\frac{\p}{\p v}+u^v_{\b{z}}d\b{z}\otimes \frac{\p}{\p v}+\o{u^v_{\b{z}}}dz\otimes \frac{\p}{\p\b{v}}+\o{u^v_z}	d\b{z}\otimes \frac{\p}{\p \b{v}}\in A^1(\Sigma, u^*T_{\mb{C}}S)
\end{align*}
and the matrix $U$ is
\begin{align*} 
U=\rho^2\left(\begin{matrix}{}
 u^v_z\o{u^v_{\b{z}}} & \frac{1}{2}(|u^v_z|^2+|u^v_{\b{z}}|^2)\\
 \frac{1}{2}(|u^v_z|^2+|u^v_{\b{z}}|^2) & u^v_{\b{z}}\o{u^v_z}	
\end{matrix}
\right).
\end{align*}
It can be also written as
(\ref{density}), 
 \begin{align}\label{density1}
 	\frac{1}{2}|du|^2 =\frac{1}{2}\frac{\rho^2(u(z))}{\det G}\left(G_{zz}u^v_{\b{z}}\o{u^v_z}+G_{\b{z}\b{z}}u^v_z\o{u^v_{\b{z}}}-G_{z\b{z}}(|u^v_z|^2+|u^v_{\b{z}}|^2)\right).
 \end{align}
 Hence, the energy function  \cite[(1.2.3), (1.2.4)]{Xin} 
 along the Weil-Petersson geodesic $\Gamma(t)$ is given by
\begin{align}\label{energy function}
E(t)=\frac{1}{2}\int_{\Sigma}|du|^2 d\mu_{g_{t}},
\end{align}
with $u=u(t)$ depending on $t$. 
\begin{rem}\label{rem1}
At $t=0$ the matrix $G$ is
\begin{align*}
G_{zz}=G_{\b{z}\b{z}}=0,\quad G_{z\b{z}}=\frac{\lambda^2_0}{2},\quad \det G=-\frac{\lambda^4_0}{4}.	
\end{align*}
	So 
	\begin{align}\label{2.0}
	|du|^2=	\frac{2\rho^2}{\lambda^2_0}(|u^v_z|^2+|u^v_{\b{z}}|^2),\quad d\mu_{g_0}=\lambda_0^2\frac{i}{2}dz\wedge d\b{z},
	\end{align}
and then the energy at $t=0$ is 
\begin{align*}
E(0)=\frac{1}{2}\int_{\Sigma}|du|^2 d\mu_{g_{0}}=\int_{\Sigma}\rho^2(|u^v_z|^2+|u^v_{\b{z}}|^2) \frac{i}{2}dz\wedge d\b{z},	
\end{align*}
which is same as (\ref{energy}).
\end{rem}

\subsection{The first derivative}
Now we calculate the first derivative of energy function $E(t)$ at
$t=0$. The engergy  (\ref{energy function}) is
\begin{align}\label{first}
	\frac{d E(t)}{dt}|_{t=0}=\frac{1}{2}\int_{\Sigma} \frac{\p |du|^2}{\p t}|_{t=0}d\mu_{g_0}+\frac{1}{2}\int_{\Sigma}|du|^2(0)\frac{\p d\mu_{g_t}}{\p t}|_{t=0}.
\end{align}
By the expression of $d\mu_{g(t)}$, (\ref{vol1}) and (\ref{vol2}), we have
\begin{align}\label{Vol3}
	\frac{\p d\mu_{g_t}}{\p t}|_{t=0}=idz\wedge d\b{z}\cdot \frac{1}{2\sqrt{|\det G|}}(-\frac{\p}{\p t}\det G)|_{t=0}=0.
\end{align}
 So the second term in the RHS of (\ref{first}) vanishes and 
\begin{align}\label{first1}
  \frac{d E(t)}{dt}|_{t=0}=\frac{1}{2}\int_{\Sigma}
  \frac{\p |du|^2}{\p t}|_{t=0}d\mu_{g_0}.
\end{align}
To find the integrand $\frac{\p |du|^2}{\p t}|_{t=0}$ we
use (\ref{density1}) and the formulas for $G$. We have
\begin{align*}
	\frac{\p G_{z\b{z}}}{\p t}|_{t=0}=\frac{\p\det G}{\p t}|_{t=0}=0,\quad G_{zz}=tq,
\end{align*}
and 
\begin{align}\label{2.1}
\frac{1}{2}\frac{\p |du|^2}{\p t}|_{t=0}=\text{Re}\left(-\frac{4\rho^2}{\lambda^4_0}\b{q}\o{u^v_{\b{z}}}u^v_z\right)+\frac{1}{\lambda_0^2}\frac{\p}{\p t}\left(\rho^2(|u^v_z|^2+|u^v_{\b{z}}|^2)\right).
\end{align}
Since $u_t$ is a harmonic map, i.e. a critical point of energy functional $E(u)$, so
\begin{multline}\label{2.2}
\int_{\Sigma}	\frac{1}{\lambda_0^2}\frac{\p}{\p t}\left(\rho^2(|u^v_z|^2+|u^v_{\b{z}}|^2)\right)d\mu_{g_0}=\int_{\Sigma}	\frac{\p}{\p t}\left(\rho^2(|u^v_z|^2+|u^v_{\b{z}}|^2)\right)\frac{i}{2}dz\wedge d\b{z}=0.
\end{multline}
Substituting (\ref{2.1}) into (\ref{first1}) and using (\ref{2.2}), we obtain
\begin{multline}\label{2.3}
	\frac{d E(t)}{dt}|_{t=0}=\text{Re}\int_{\Sigma}\left(-\frac{4\rho^2}{\lambda^2_0}\b{q}\o{u^v_{\b{z}}}u^v_z\right)\frac{i}{2}dz\wedge d\b{z}=-4\left\langle\rho^2u^v_z\o{u^v_{\b{z}}}dz^2, \frac{\b{q}}{\lambda_0^2}\frac{d\b{z}}{dz}\right\rangle_{QB},
\end{multline}
where $\langle\cdot,\cdot\rangle_{QB}$ is a pairing between holomorphic quadratic differentials and harmonic Beltrami differentials by 
$$\langle \phi dz^2,\mu\frac{d\b{z}}{dz}\rangle_{QB}:=\text{Re}\int_{\Sigma}\phi\mu\frac{i}{2}dz\wedge d\b{z},$$
which is non-degenerate. Note here that $\rho^2u^v_z\o{u^v_{\b{z}}}dz^2$ is a holomorphic quadratic differential (called {\it Hopf differential of $u$}) since
\begin{align*}
	\p_{\b{z}}(\rho^2u^v_z\o{u^v_{\b{z}}})=\rho^2\n_{\b{z}}u^v_z\o{u^v_{\b{z}}}+\rho^2u^v_z\o{\n_zu^v_{\b{z}}}=0
\end{align*}
by the harmonicity equation (\ref{harmonic}). Thus if $t=0$ is a critical point of the energy function, then 
(\ref{2.3}) is equal to zero for any harmonic Beltrami differential, which implies that 
\begin{align*}
\left(u^*(\rho^2dvd\b{v})\right)^{2,0}=	\rho^2u^v_z\o{u^v_{\b{z}}}dz^2=0,
\end{align*}
i.e. the harmonic map $u$ for $t=0$ is weakly conformal, and so 
 $u$ is $\pm$ holomorphic, see e.g. \cite[Page 87 and Proposition 3.5.9]{BC}. In fact, the $(0,1)$-part of the natural connection $\n^{u^*K_{S}^*\otimes K_{\Sigma}}$ gives a holomorphic structure on the bundle $u^*K_{S}^*\otimes K_{\Sigma}$, and the harmonicity equation (\ref{harmonic}) means that $u^v_z\frac{\p}{\p v}\otimes dz$ is a holomorphic section of $u^*K_{S}^*\otimes K_{\Sigma}$, thus the set of zero points of $u^v_z$ is discrete unless $u^v_z\equiv 0$. Similarly, the set of zero points of $u^v_{\b{z}}$ is also discrete unless $u^v_{\b{z}}\equiv 0$. Thus $u^v_z\o{u^v_{\b{z}}}\equiv 0$ if and only if $u$ is $\pm$ holomorphic.
Therefore,
\begin{prop}\label{prop0}
$t_0\in \mc{T}$ is a critical point of energy function if and only if the associated harmonic map $u_{t_0}$ is $\pm$ holomorphic. 	
\end{prop}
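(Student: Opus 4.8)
The plan is to prove Proposition \ref{prop0} by establishing both implications through the first variation formula \eqref{2.3} and the analytic structure of the Hopf differential. First I would recall the identity
\begin{align*}
	\frac{d E(t)}{dt}\Big|_{t=0}=-4\left\langle\rho^2u^v_z\o{u^v_{\b{z}}}dz^2, \frac{\b{q}}{\lambda_0^2}\frac{d\b{z}}{dz}\right\rangle_{QB},
\end{align*}
which is valid for an \emph{arbitrary} choice of holomorphic quadratic differential $qdz^2$ on $\Sigma_{t_0}$, since any harmonic Beltrami differential arises this way. Because the pairing $\langle\cdot,\cdot\rangle_{QB}$ between holomorphic quadratic differentials and harmonic Beltrami differentials is non-degenerate, $t_0$ being a critical point (i.e. the left-hand side vanishing for every Weil--Petersson direction $\mu$) is equivalent to the vanishing of the Hopf differential $\Phi:=\rho^2u_z^v\overline{u_{\bar z}^v}\,dz^2\equiv 0$.

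Next I would convert the vanishing of the Hopf differential into the statement that $u_{t_0}$ is $\pm$ holomorphic. The key input, already developed in the text, is that the harmonicity equation \eqref{harmonic} says precisely that $u_z^v\,\frac{\p}{\p v}\otimes dz$ is a holomorphic section of the holomorphic line bundle $u^*K_S^*\otimes K_\Sigma$ (with holomorphic structure induced by the $(0,1)$-part of the natural connection), and symmetrically that $\overline{u_{\bar z}^v}$ is, after conjugation, a holomorphic section of the conjugate-type bundle. A nonzero holomorphic section of a line bundle over a compact Riemann surface has only isolated zeros; hence each of $u_z^v$ and $u_{\bar z}^v$ either vanishes identically or vanishes only on a discrete (finite) set. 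If $\Phi\equiv 0$ then $u_z^v\overline{u_{\bar z}^v}\equiv 0$, and since the zero sets of the two factors are discrete unless the factor is identically zero, one of $u_z^v$ or $u_{\bar z}^v$ must vanish identically; that is, $u_{t_0}$ is holomorphic or anti-holomorphic. Conversely, if $u_{t_0}$ is $\pm$ holomorphic then one of $u_z^v,u_{\bar z}^v$ vanishes identically, so $\Phi\equiv 0$ and \eqref{2.3} is zero for all $\mu$, making $t_0$ critical. Combining the two directions gives the proposition.

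The only genuinely delicate point is the passage from $u_z^v\overline{u_{\bar z}^v}\equiv 0$ to ``one factor vanishes identically.'' This is not merely pointwise algebra: on a connected surface a continuous function could in principle vanish on one open set and not another. The resolution is exactly the holomorphicity observation above --- the zero locus of a nontrivial holomorphic section of a holomorphic line bundle on a compact connected Riemann surface is discrete, so its complement is connected and dense, and the product can be identically zero only if one section is. I would also remark, for completeness, that ``weakly conformal of nonzero degree $\Rightarrow$ $\pm$ holomorphic'' is standard (see \cite[Page 87 and Proposition 3.5.9]{BC}), giving an alternative route that bypasses the line-bundle argument, but I prefer the line-bundle formulation since the same holomorphic-section structure is reused later when analyzing the ramification indices in Corollary \ref{cor0.1}.
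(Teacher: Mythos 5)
Your proposal is correct and follows essentially the same route as the paper: the first variation formula \eqref{2.3} together with the non-degeneracy of the pairing $\langle\cdot,\cdot\rangle_{QB}$ reduces criticality to the vanishing of the Hopf differential, and the passage from $u^v_z\o{u^v_{\b{z}}}\equiv 0$ to $\pm$ holomorphicity is made via the same observation the paper uses, namely that harmonicity makes $u^v_z\frac{\p}{\p v}\otimes dz$ a holomorphic section of $u^*K_S^*\otimes K_\Sigma$ (and similarly for $u^v_{\b z}$), so each factor has discrete zero set unless identically zero. Your extra care about the connectedness issue in the product argument is a fair elaboration of the same idea, not a different method.
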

Indeed, the above proposition was contained in Sack-Uhlenbeck {\cite[Theorem 4]{SU}} by considering the harmonic maps from Riemann surfaces to a general Riemannian manifold, see also \cite[Theorem 1.8]{SU2} for its proof.

\subsection{The second derivative}

The second derivative of energy function, by \eqref{Vol3},
is
\begin{align}\label{second}
	\frac{d^2 E(t)}{dt^2}|_{t=0}=\frac{1}{2}\int_{\Sigma} \frac{\p^2 |du|^2}{\p t^2}|_{t=0}d\mu_{g_0}+\frac{1}{2}\int_{\Sigma}|du|^2(0)\frac{\p^2 d\mu_{g_t}}{\p t^2}|_{t=0}.
\end{align}
The second term in the RHS, by \eqref{vol1} and \eqref{2.0}, is
\begin{multline}\label{2.9}
\quad \frac{1}{2}\int_{\Sigma}|du|^2(0)	 \frac{1}{2\sqrt{|\det G|}}(-\frac{\p^2}{\p t^2}\det G)|_{t=0}idz\wedge d\b{z}\\
=\int_{\Sigma}	\frac{2\rho^2}{\lambda^2_0}(|u^v_z|^2+|u^v_{\b{z}}|^2)\frac{1}{\lambda^2_0}(\alpha\lambda_0^4-|q|^2) \frac{i}{2}dz\wedge d\b{z}.
\end{multline}
To deal with the first term in the RHS of \eqref{second}
we use \eqref{density1} to find
\begin{aligns}\label{2.10}
\frac{1}{2}\frac{\p^2 |du|^2}{\p t^2}|_{t=0} &=\frac{\p^2}{\p t^2}\left(\text{Re}\left(\frac{G_{zz}}{\det G}\cdot\rho^2 u^v_{\b{z}}\o{u^v_z}\right)-\frac{1}{2}\frac{G_{z\b{z}}}{\det G}\cdot\rho^2(|u^v_z|^2+|u^v_{\b{z}}|^2)\right)\\
&=2\text{Re}\left(\frac{\p}{\p t}\left(\frac{G_{zz}}{\det G}\right)\frac{\p}{\p t}(\rho^2 u^v_{\b{z}}\o{u^v_z})\right)-\frac{1}{2}\frac{\p^2}{\p t^2}\left(\frac{G_{z\b{z}}}{\det G}\right)\cdot\rho^2(|u^v_z|^2+|u^v_{\b{z}}|^2)\\
&\quad -\frac{1}{2}\frac{G_{z\b{z}}}{\det G}\frac{\p^2}{\p t^2}(\rho^2(|u^v_z|^2+|u^v_{\b{z}}|^2))\\
&=\text{Re}\left(-\frac{8q}{\lambda_0^4}\frac{\p}{\p t}(\rho^2 u^v_{\b{z}}\o{u^v_z})\right)+ \frac{2}{\lambda^2_0}\left(-\alpha+\frac{3|q|^2}{\lambda_0^4}\right)\cdot\rho^2(|u^v_z|^2+|u^v_{\b{z}}|^2)\\
&\quad +\frac{1}{\lambda_0^2}\frac{\p^2}{\p t^2}(\rho^2(|u^v_z|^2+|u^v_{\b{z}}|^2)),
\end{aligns}
where the last two equalities follow from 
\begin{align*}
\frac{G_{zz}}{\det G}=\frac{t q}{-\frac{\lambda_0^4}{4}-\frac{t^2}{2}(\alpha\lambda_0^4-|q|^2)+O(t^4)}=-\frac{4q}{\lambda_0^4}t+O(t^3)
\end{align*}
and
\begin{align*}
-\frac{G_{z\b{z}}}{\det G}=\frac{\frac{\lambda^2_0}{2}+\frac{t^2}{2}\left(\frac{|q|^2}{\lambda^4_0}+\alpha\right)\lambda^2_0+O(t^4)}{\frac{\lambda^4_0}{4}+\frac{t^2}{2}(\alpha\lambda_0^4-|q|^2)+O(t^4)}	=\frac{2}{\lambda^2_0}+\frac{t^2}{2}\cdot \frac{4}{\lambda^2_0}\left(-\alpha+\frac{3|q|^2}{\lambda_0^4}\right)+O(t^4).
\end{align*}
Fix a small $\epsilon>0$. Write
the smooth family of harmonic maps $\{u_t\}_{t\in 
  (-\epsilon,\epsilon)}$
as $u$, 
\begin{align*}
u: (-\epsilon,\epsilon)\times\Sigma\to S,\quad u(t,z):=u_t(z).
\end{align*}
The pullback complexified tangent
bundle $$u^*T_{\mb{C}}S=u^*(K_S^*\oplus \o{K_S^*})\to
(-\epsilon,\epsilon)\times\Sigma$$ endows with an induced connection
from the Chern connection of the Hermitian line bundle
$(K_S^*,\rho^2dv\otimes d\b{v})$, and we denote it also by $\n$. Then 
\begin{align}\label{2.11}
\frac{\p}{\p t}(\rho^2u^v_z\o{u^v_{\b{z}}})=\rho^2(\n_tu^v_z)	\o{u^v_{\b{z}}}+\rho^2u^v_z\o{\n_tu^v_{\b{z}}},
\end{align}
and
\begin{aligns}\label{2.4}
  \frac{\p^2}{\p t^2}\left(\rho^2(|u^v_z|^2+|u^v_{\b{z}}|^2)\right) & =
  \frac{\p}{\p t}\left(\rho^2\left( (\n_tu^v_z) \o{u^v_z}+u^v_z\o{\n_tu^v_z}+\n_t u^v_{\b{z}}\o{u^v_{\b{z}}}+u^v_{\b{z}}\o{\n_t u^v_{\b{z}}}\right)\right)\\
&=2\rho^2\left(\text{Re}( (\n_t\n_z u^v_t) \o{u^v_z})+|\n_tu^v_z|^2\right)\\
&\quad +2\rho^2\left(\text{Re}( (\n_t\n_{\b{z}} )u^v_t \o{u^v_{\b{z}}})+|\n_tu^v_{\b{z}}|^2\right).
\end{aligns}
Since $[\n_t,\n_z]=R(\frac{\p u}{\p t},\frac{\p u}{\p z})$,
where $R=\n^2$ is the curvature operator, so
\begin{aligns}\label{2.5}
\n_t\n_zu^v_t &=\n_z\n_tu^v_t+R^v_{vtz}u^v_t\\
&=\n_z\n_tu^v_t+R^{v}_{vv\b{v}}(u^v_t\o{u^v_{\b{z}}}-\o{u^v_t}u^v_z)u^v_t\\
&=\n_z\n_tu^v_t-\frac{\rho^2}{2}(u^v_t\o{u^v_{\b{z}}}-\o{u^v_t}u^v_z)u^v_t.
\end{aligns}
Here  we have used the fact that
\begin{align*}
R^v_{vtz}\frac{\p}{\p v}&:=R(\frac{\p u}{\p t},\frac{\p u}{\p z})\frac{\p}{\p v}=u^v_t\o{u^v_{\b{z}}}R(\frac{\p}{\p v},\frac{\p}{\p\b{v}})\frac{\p}{\p v} +\o{u^v_t}u^v_zR(\frac{\p}{\p\b{v}},\frac{\p}{\p v})\frac{\p}{\p v}\\
&=(u^v_t\o{u^v_{\b{z}}}-\o{u^v_t}u^v_z)R^{v}_{vv\b{v}}\frac{\p}{\p v},
\end{align*}
with $R^{v}_{vv\b{v}}\frac{\p}{\p v}:=R(\frac{\p}{\p
  v},\frac{\p}{\p\b{v}})\frac{\p}{\p v}, $
and the fact \eqref{1.3} that
$$R^v_{vv\b{v}}=-\p_{\b{v}}(\p_v\log \rho^2)=-\frac{\rho^2}{2}.
$$
Similarly, 
\begin{align}\label{2.6}
\n_t\n_{\b{z}}u^v_t=\n_{\b{z}}\n_t u^v_t-\frac{\rho^2}{2}(u^v_t\o{u^v_z}-\o{u^v_t}u^v_{\b{z}})u^v_t.	
\end{align}
Thus \eqref{2.4} becomes
\begin{aligns}\label{2.7}
\frac{\p^2}{\p t^2}\left(\rho^2(|u^v_z|^2+|u^v_{\b{z}}|^2)\right) & = 2\rho^2\text{Re}(\n_z\n_tu^v_t\o{u^v_z}+\n_{\b{z}}\n_tu^v_t\o{u^v_{\b{z}}})\\
&\quad+2\rho^2(|\n_tu^v_z|^2+|\n_tu^v_{\b{z}}|^2)\\
&\quad+\rho^4\left(-2\text{Re}((u^v_t)^2\o{u^v_z}\o{u^v_{\b{z}}})+|u^v_t|^2(|u^v_z|^2+|u^v_{\b{z}}|^2)\right).
\end{aligns}
We integrate the  first term in the RHS of (\ref{2.7}) above using
 \eqref{harmonic} and integration by parts,
\begin{aligns}\label{2.8}
  &\quad \int_{\Sigma}\frac{1}{\lambda_0^2}	2\rho^2\text{Re}(
  (\n_z\n_tu^v_t )\o{u^v_z}+\n_{\b{z}}\n_tu^v_t\o{u^v_{\b{z}}})d\mu_{g_0}\\
&=\text{Re}\int_{\Sigma}\rho^2( (\n_z\n_tu^v_t) \o{u^v_z}+\n_{\b{z}}\n_tu^v_t\o{u^v_{\b{z}}})idz\wedge d\b{z}\\
&=-\text{Re}\int_{\Sigma}\rho^2(\n_tu^v_t\o{\n_{\b{z}}u^v_z}+\n_tu^v_t\o{\n_zu^v_{\b{z}}})idz\wedge d\b{z}=0.
\end{aligns}
Thus,
altogether
using  \eqref{second}-\eqref{2.11}, \eqref{2.7} and \eqref{2.8} 
we obtain
   \begin{aligns}\label{second1}
   	\frac{d^2 E(t)}{dt^2}|_{t=0} &=\text{Re}\int_{\Sigma}-\frac{4\b{q}}{\lambda_0^2}\rho^2(\n_t u^v_z\o{u^v_{\b{z}}}+u^v_z\o{\n_tu^v_{\b{z}}})idz\wedge d\b{z}\\
   	&\quad+\int_{\Sigma}2\rho^2(|u^v_z|^2+|u^v_{\b{z}}|^2)\frac{|q|^2}{\lambda_0^4}idz\wedge d\b{z}\\
   	&\quad+\int_{\Sigma}\rho^2(|\n_t u^v_z|^2+|\n_tu^v_{\b{z}}|^2)idz\wedge d\b{z}\\
   	&\quad+\int_{\Sigma}\rho^4\left(-2\text{Re}((u^v_t)^2\o{u^v_z}\o{u^v_{\b{z}}})+|u^v_t|^2(|u^v_z|^2+|u^v_{\b{z}}|^2)\right)\frac{i}{2}dz\wedge d\b{z}.
   \end{aligns}
Note again that $u_t$ is a harmonic map for each $t\in (-\epsilon,\epsilon)$, so it satisfies the following harmonicity equation 
\begin{align}\label{2.12}
\text{Tr}_{g(t)}(\n^{u^*T_{\mb{C}}S\otimes K_{\Sigma}}du)=0	
\end{align}
where $\n^{u^*T_{\mb{C}}S\otimes K_{\Sigma}}$ denote the induced the connection from the connection $\n$ on $u^*T_{\mb{C}}S$ and the Chern connection on the Hermitian line bundle $(K_{\Sigma}, \frac{1}{\lambda_0^2}\frac{\p}{\p z}\otimes \frac{\p}{\p\b{z}})$; see e.g. \cite[Definition 1.2.2]{Xin}. 
 Denote $F=u^*K_S^*\otimes K_{\Sigma}$ and let $\n^F$ be the induced connection on $F$ so that $\n^{u^*T_{\mb{C}}S\otimes K_{\Sigma}}=\n^F\oplus\o{\n^F}$.  In local coordinates, \eqref{2.12} is equivalent to 
\begin{align}\label{2.13}
G^{zz}\n^F_zu^v_z+G^{z\b{z}}\n^F_z u^v_{\b{z}}+G^{\b{z}z}\n^F_{\b{z}}u^v_z+G^{\b{z}\b{z}}\n^F_{\b{z}}u^v_{\b{z}}=0.	
\end{align}
Taking the derivative of both sides of \eqref{2.13} at $t=0$ results in
 \begin{align}\label{2.15}
 \frac{\b{q}}{\lambda^2_0}\n^F_zu^v_z+\frac{q}{\lambda^2_0}\n^F_{\b{z}}u^v_{\b{z}}=\frac{\p}{\p t}\n^F_zu^v_{\b{z}}.	
 \end{align}
Let $\{\Gamma^{z}_{zz}, \Gamma^z_{z\b{z}}, \Gamma^{\b{z}}_{zz}\}$ denote the Christoffel symbols of the hyperbolic metrics $g(t)$. Since $G_{zz}=tq$,$G_{\b{z}\b{z}}=t\b{q}$ and $q$ is holomorphic, so 
\begin{align}\label{2.14}
\Gamma^z_{z\b{z}}(t)=\frac{1}{2}G^{zz}\frac{\p G_{zz}}{\p\b{z}}+\frac{1}{2}G^{z\b{z}}\frac{\p G_{\b{z}\b{z}}}{\p z}=0,	
\end{align}
and 
\begin{align*}
\Gamma^z_{zz}(0)=\p_z\log\lambda_0^2,\quad 	\Gamma^{\b{z}}_{zz}(0)=0.
\end{align*}
This implies that for all $t$
\begin{align}\label{2.16}
\n^F_zu^v_{\b{z}}=\n_{z}u^v_{\b{z}}-\Gamma^{z}_{z\b{z}}u^v_z-\Gamma^{\b{z}}_{z\b{z}}u^v_{\b{z}}=\n_{z}u^v_{\b{z}}.
\end{align}
On the other hand, by \eqref{harmonic} and \eqref{2.5} we have
\begin{aligns}\label{2.17}
\frac{\p}{\p t}\n_zu^v_{\b{z}}&=\n_t \n_z u^v_{\b{z}}
=\frac{1}{2}(\n_t\n_z u^v_{\b{z}}+\n_t\n_{\b{z}}u^v_z)\\
&=\frac{1}{2}(\n_z\n_t u^v_{\b{z}}+\n_{\b{z}}\n_t u^v_z+R^v_{vtz}u^v_{\b{z}}+R^v_{vt\b{z}}u^v_z)\\
&=	\frac{1}{2}(\n_z\n_t u^v_{\b{z}}+\n_{\b{z}}\n_t u^v_z)-\frac{\rho^2}{4}u^v_{\b{z}}(u^v_t\o{u^v_{\b{z}}}-\o{u^v_t}u^v_z)\\
&\quad -\frac{\rho^2}{4}u^v_z(u^v_t\o{u^v_z}-\o{u^v_t}u^v_{\b{z}}).
\end{aligns}
Thus, by \eqref{2.16} and \eqref{2.17}, \eqref{2.15} can also be
written as
\begin{multline}\label{2.18}
\frac{\b{q}}{\lambda^2_0}\n^F_zu^v_z+\frac{q}{\lambda^2_0}\n^F_{\b{z}}u^v_{\b{z}}=\frac{1}{2}(\n_z\n_{\b{z}} u^v_{t}+\n_{\b{z}}\n_z u^v_t)\\-\frac{\rho^2}{4}u^v_{\b{z}}(u^v_t\o{u^v_{\b{z}}}-\o{u^v_t}u^v_z)
 -\frac{\rho^2}{4}u^v_z(u^v_t\o{u^v_z}-\o{u^v_t}u^v_{\b{z}}).
\end{multline}
By Stokes' theorem and using (\ref{2.18}), the first term in the RHS of \eqref{second1} reduces to
\begin{aligns}\label{2.19}
&\quad \text{Re}\int_{\Sigma}-\frac{4\b{q}}{\lambda_0^2}\rho^2(\n_t u^v_z\o{u^v_{\b{z}}}+u^v_z\o{\n_tu^v_{\b{z}}})idz\wedge d\b{z}\\
&=	\text{Re}\int_{\Sigma}\frac{4\b{q}}{\lambda_0^2}\rho^2( u^v_t\o{\n^F_{\b{z}}u^v_{\b{z}}}+\n^F_zu^v_z\o{u^v_{t}})idz\wedge d\b{z}\\
&=\text{Re}\int_{\Sigma}4\rho^2\o{u^v_t}(\frac{\b{q}}{\lambda^2_0}\n^F_zu^v_z+\frac{q}{\lambda^2_0}\n^F_{\b{z}}u^v_{\b{z}})idz\wedge d\b{z}\\
&=\text{Re}\int_{\Sigma}4\rho^2\o{u^v_t}\left(\frac{1}{2}(\n_z\n_t u^v_{\b{z}}+\n_{\b{z}}\n_t u^v_z)\right.\\
&\left.\quad-\frac{\rho^2}{4}u^v_{\b{z}}(u^v_t\o{u^v_{\b{z}}}-\o{u^v_t}u^v_z)
 -\frac{\rho^2}{4}u^v_z(u^v_t\o{u^v_z}-\o{u^v_t}u^v_{\b{z}})\right)idz\wedge d\b{z}\\
 &=-2\int_{\Sigma}\rho^2(|\n_t u^v_z|^2+|\n_tu^v_{\b{z}}|^2)idz\wedge d\b{z}\\
 &\quad -2\int_{\Sigma}\rho^4\left(-2\text{Re}((u^v_t)^2\o{u^v_z}\o{u^v_{\b{z}}})+|u^v_t|^2(|u^v_z|^2+|u^v_{\b{z}}|^2)\right)\frac{i}{2}dz\wedge d\b{z}.
\end{aligns}
Substituting \eqref{2.19} into \eqref{second1} gives
\begin{aligns}\label{second2}
   	\frac{1}{2}\frac{d^2 E(t)}{dt^2}|_{t=0} &=\int_{\Sigma}\rho^2(|u^v_z|^2+|u^v_{\b{z}}|^2)\frac{|q|^2}{\lambda_0^4}idz\wedge d\b{z}\\
   	&\quad+\text{Re}\int_{\Sigma}\rho^2\o{u^v_t}(\frac{\b{q}}{\lambda^2_0}\n^F_zu^v_z+\frac{q}{\lambda^2_0}\n^F_{\b{z}}u^v_{\b{z}})idz\wedge d\b{z}.
   \end{aligns}
For the pullback complexified tangent bundle $u^*T_{\mb{C}}S=u^*K_S^*\oplus\o{u^*K_S^*}$, it can be  equipped with the following pointwise Hermitian inner product
\begin{align}\label{HI1}
\left(f_1\frac{\p}{\p v}+f_2\frac{\p}{\p \b{v}}, e_1\frac{\p}{\p v}+e_2\frac{\p}{\p\b{v}}\right):=\rho^2(f_1\o{e_1}+f_2\o{e_2}),	
\end{align}
and the global inner product is defined by 
\begin{align}\label{HI2}
\langle\cdot,\cdot\rangle=\int_{\Sigma}(\cdot,\cdot)\lambda_0^2\frac{i}{2}dz\wedge d\b{z}.	
\end{align}
Denote 
\begin{align*}
\p u:=\frac{\p u}{\p z}\otimes dz=(u^v_z\frac{\p}{\p v}+\o{u^v_{\b{z}}}\frac{\p}{\p\b{v}})\otimes dz\in A^0(\Sigma,u^*T_{\mb{C}}S\otimes K_{\Sigma}), 	
\end{align*}
and define $\n^{u^*T_{\mb{C}}S\otimes K_\Sigma}_z\frac{\p u}{\p z}$ by
\begin{align*}
\n^{u^*T_{\mb{C}}S\otimes K_\Sigma}_{\p/\p z}\p u	=\n^{u^*T_{\mb{C}}S\otimes K_\Sigma}_{\p/\p z}(\frac{\p u}{\p z}\otimes dz)=: (\n^{u^*T_{\mb{C}}S\otimes K_\Sigma}_z\frac{\p u}{\p z})\otimes dz. 
\end{align*}
It can be  easily checked that $\frac{2\b{q}}{\lambda_0^4}\n^{u^*T_{\mb{C}}S\otimes K_\Sigma}_z\frac{\p u}{\p z}$ is a global section of $u^*T_{\mb{C}}S$, and 
\begin{aligns}\label{2.20}
&\quad \left\langle \text{Re}\left(\frac{2\b{q}}{\lambda_0^4}\n^{u^*T_{\mb{C}}S\otimes K_\Sigma}_z\frac{\p u}{\p z}\right),\frac{\p u}{\p t}\right\rangle =\text{Re}\left\langle \frac{2\b{q}}{\lambda_0^4}\n^{u^*T_{\mb{C}}S\otimes K_\Sigma}_z\frac{\p u}{\p z},\frac{\p u}{\p t}\right\rangle\\
&=\text{Re}\int_{\Sigma}(\frac{2\b{q}}{\lambda_0^4}\n^F_zu^v_z\o{u^v_t}\rho^2+\frac{2\b{q}}{\lambda_0^4}\o{\n^F_{\b{z}}u^v_{\b{z}}}u^v_t\rho^2)\lambda_0^2\frac{i}{2}dz\wedge d\b{z}\\
&=\text{Re}\int_{\Sigma}\rho^2\o{u^v_t}(\frac{\b{q}}{\lambda^2_0}\n^F_zu^v_z+\frac{q}{\lambda^2_0}\n^F_{\b{z}}u^v_{\b{z}})idz\wedge d\b{z},
\end{aligns}
which is exactly the second term in the RHS of (\ref{second2}). Here $\frac{\p u}{\p z}:=u_*(\frac{\p}{\p z})$ and $\frac{\p u}{\p t}:=u_*(\frac{\p}{\p t})$. Note that $\n$ is the natural induced connection on $u^*T_{\mb{C}}S$,  then  the Jacobi operator is given by 
\begin{align}\label{2.21}
\mc{J}:=-\frac{1}{\lambda_0^2}\n_z	\n_{\b{z}}	-\frac{1}{\lambda_0^2}R(\bullet,\frac{\p u}{\p z})\frac{\p u}{\p\b{z}}
\end{align}
which acts on the smooth section of $u^*T_{\mb{C}}S$, where the curvature operator $R=\n^2$ is given by $R(X,Y):=[\n_X,\n_Y]-\n_{[X,Y]}$. Then 
\begin{prop}\label{prop1}
$\mc{J}$ is real, semi-positive and symmetric. Moreover 
\begin{align*}
\mc{J}\left(\frac{\p u}{\p t}\right)=-\text{Re}\left(\frac{2\b{q}}{\lambda_0^4}\n^{u^*T_{\mb{C}}S\otimes K_\Sigma}_z\frac{\p u}{\p z}\right).
\end{align*}
\end{prop}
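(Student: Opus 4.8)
The plan is to establish Proposition~\ref{prop1} in two independent pieces: first the algebraic properties (real, symmetric, semi-positive) of $\mc{J}$, and then the identity relating $\mc{J}(\p u/\p t)$ to the Hopf-type expression $\mathrm{Re}(\frac{2\b q}{\lambda_0^4}\n^{u^*T_{\mb C}S\otimes K_\Sigma}_z\frac{\p u}{\p z})$. For the first piece I would work directly from the definition \eqref{2.21}. Reality of $\mc J$ means it commutes with the conjugation $\o{\cdot}$ on $u^*T_{\mb C}S = u^*K_S^*\oplus\o{u^*K_S^*}$; this follows because the induced connection $\n$ is the complexification of a real connection, so $\n_{\b z}=\o{\n_z}$ on conjugate sections and the curvature term is likewise conjugation-equivariant (here one uses $R^v_{vv\b v}=-\rho^2/2$ being real, as computed in \eqref{2.5}). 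Symmetry with respect to the global inner product \eqref{HI1}--\eqref{HI2} is an integration-by-parts computation: for sections $\xi,\eta$ of $u^*T_{\mb C}S$,
\begin{align*}
\langle -\tfrac{1}{\lambda_0^2}\n_z\n_{\b z}\xi,\eta\rangle = \int_\Sigma(\n_{\b z}\xi,\n_{\b z}\eta)\,\tfrac{i}{2}\,dz\wedge d\b z,
\end{align*}
which is manifestly symmetric in $\xi,\eta$, and the curvature term $-\frac{1}{\lambda_0^2}R(\bullet,\frac{\p u}{\p z})\frac{\p u}{\p\b z}$ is pointwise self-adjoint because, expanding via $R^v_{vv\b v}=-\rho^2/2$, it acts as multiplication by the nonnegative Hermitian symbol $\frac{\rho^2}{2\lambda_0^2}(|u^v_z|^2+|u^v_{\b z}|^2)$ (a scalar), up to the $(u^v_t)^2$-type cross terms whose self-adjointness is a short check. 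The same computation shows semi-positivity: $\langle\mc J\xi,\xi\rangle = \int_\Sigma|\n_{\b z}\xi|^2\,\tfrac{i}{2}\,dz\wedge d\b z + (\text{nonnegative curvature contribution})\ge 0$, where the positivity of the curvature term is exactly the non-positivity of the sectional curvature of $S$ (hyperbolic, $K=-1$) fed through the standard second-variation-of-energy sign.

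For the identity, I would simply recognize that it has already been assembled: combining \eqref{2.20} (which rewrites the relevant integral as $\langle\mathrm{Re}(\frac{2\b q}{\lambda_0^4}\n^{u^*T_{\mb C}S\otimes K_\Sigma}_z\frac{\p u}{\p z}),\frac{\p u}{\p t}\rangle$) with the harmonicity-derivative relation \eqref{2.15}--\eqref{2.18}, the claim $\mc J(\p u/\p t) = -\mathrm{Re}(\frac{2\b q}{\lambda_0^4}\n^{u^*T_{\mb C}S\otimes K_\Sigma}_z\frac{\p u}{\p z})$ is equivalent to showing that $\mc J(\p u/\p t)$, paired against an arbitrary test section (equivalently, against $\p u/\p t$ itself by symmetry, or more safely against all sections using density), reproduces the RHS of \eqref{2.18} up to the factor $-\frac12$ and the sign conventions. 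Concretely, I would expand $\mc J(\p u/\p t)$ using \eqref{2.21}: the term $-\frac{1}{\lambda_0^2}\n_z\n_{\b z}\frac{\p u}{\p t} = -\frac{1}{2\lambda_0^2}(\n_z\n_{\b z}+\n_{\b z}\n_z)\frac{\p u}{\p t} + (\text{curvature correction})$ after symmetrizing via the commutator $[\n_z,\n_{\b z}]=R(\frac{\p u}{\p z},\frac{\p u}{\p\b z})$, and the curvature correction together with the explicit Jacobi curvature term $-\frac{1}{\lambda_0^2}R(\bullet,\frac{\p u}{\p z})\frac{\p u}{\p\b z}$ should match precisely the two $\rho^2/4$ terms in \eqref{2.18} (with $\xi$ there playing the role of $\p u/\p t$). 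Then \eqref{2.18} identifies this with $-\frac{1}{\lambda_0^2}\cdot\lambda_0^2(\frac{\b q}{\lambda_0^2}\n^F_z u^v_z + \frac{q}{\lambda_0^2}\n^F_{\b z}u^v_{\b z})$ up to normalization, which is exactly $-\mathrm{Re}(\frac{2\b q}{\lambda_0^4}\n^{u^*T_{\mb C}S\otimes K_\Sigma}_z\frac{\p u}{\p z})$ by the bookkeeping in \eqref{2.20}.

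The main obstacle I anticipate is the bookkeeping of the curvature terms — tracking all the $\rho^2$ and $\rho^4$ factors, the index symmetries of $R^v_{vtz}$ versus $R^v_{vt\b z}$ (as in \eqref{2.5}--\eqref{2.6}), and making sure the symmetrization $\frac12(\n_z\n_{\b z}+\n_{\b z}\n_z)$ is used consistently so that the commutator terms land with the right signs to cancel against the explicit Jacobi-operator curvature term rather than doubling it. A secondary subtlety is that $\p u/\p t$ is a section of $u^*T_{\mb C}S$ over $(-\epsilon,\epsilon)\times\Sigma$, not a priori of the form $\n_t(\text{something})$ intrinsic to $\Sigma_{t_0}$, so one must be careful that all the $t$-derivative identities \eqref{2.11}--\eqref{2.17} are being evaluated at $t=0$ and that $\frac{\p}{\p t}\n^F_z u^v_{\b z} = \n_t\n_z u^v_{\b z}$ uses $\Gamma^z_{z\b z}(t)\equiv 0$ from \eqref{2.14} (so the $F$-connection and the ambient connection agree in this slot for \emph{all} $t$, not just at $t=0$). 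Modulo that care, every ingredient — reality from real-structure equivariance, symmetry and semi-positivity from integration by parts plus $K_S\le 0$, and the identity from \eqref{2.15}--\eqref{2.20} — is already in place, so the proof is essentially an organized assembly of the preceding computations.
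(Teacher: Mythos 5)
Your overall strategy coincides with the paper's: symmetry and semi-positivity by integration by parts against \eqref{HI1}--\eqref{HI2} together with the explicit curvature of the hyperbolic target, and the identity for $\mc{J}(\p u/\p t)$ by feeding the differentiated harmonicity equation \eqref{2.15}--\eqref{2.18} into a commutation of covariant derivatives. The paper organizes the identity slightly differently (it uses reality to write $\mc{J}(\p u/\p t)=2\Re\,\mc{J}(u^v_t\tfrac{\p}{\p v})$ and applies \eqref{2.15}, \eqref{2.17} directly rather than passing through \eqref{2.18}), but your symmetrization route is equivalent and the bookkeeping you worried about does close: the commutator correction $\tfrac{1}{2\lambda_0^2}[\n_z,\n_{\b z}]u^v_t=-\tfrac{\rho^2}{4\lambda_0^2}(|u^v_z|^2-|u^v_{\b z}|^2)u^v_t$ combines with the two $\rho^2/4$ terms of \eqref{2.18} to reproduce exactly the $\tfrac{\p}{\p v}$-component $\tfrac{\rho^2}{2\lambda_0^2}u^v_{\b z}(u^v_t\o{u^v_{\b z}}-\o{u^v_t}u^v_z)$ of $-\tfrac{1}{\lambda_0^2}R(\tfrac{\p u}{\p t},\tfrac{\p u}{\p z})\tfrac{\p u}{\p\b z}$, and the conjugate component follows by reality and the $\Re$-bookkeeping of \eqref{2.20}. (Your parenthetical ``pair against $\p u/\p t$ itself by symmetry'' is not an equivalent formulation --- pairing with a single section does not determine a section --- but your concrete pointwise plan supersedes it.)

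Two of your stated justifications are, however, not sufficient as written, and they are precisely the content of the ``algebraic'' half of the proposition. (i) Reality: conjugation-equivariance of $\n$ and $R$ only yields $\o{\mc{J}}=-\tfrac{1}{\lambda_0^2}\n_{\b z}\n_z-\tfrac{1}{\lambda_0^2}R(\bullet,\tfrac{\p u}{\p\b z})\tfrac{\p u}{\p z}$, which is a priori a different operator from \eqref{2.21}; to conclude $\o{\mc{J}}=\mc{J}$ you still need $[\n_{\b z},\n_z]=R(\tfrac{\p u}{\p\b z},\tfrac{\p u}{\p z})$ together with the first Bianchi identity $R(\tfrac{\p u}{\p\b z},\tfrac{\p u}{\p z})X=R(X,\tfrac{\p u}{\p z})\tfrac{\p u}{\p\b z}-R(X,\tfrac{\p u}{\p\b z})\tfrac{\p u}{\p z}$, which is exactly the short computation in the paper's proof and is not a formal consequence of ``the connection is real''. (ii) Semi-positivity of the curvature part: $\mc{R}$ is not multiplication by the scalar $\tfrac{\rho^2}{2\lambda_0^2}(|u^v_z|^2+|u^v_{\b z}|^2)$, and $u^v_t$ does not enter $\mc{R}$ at all; in the frame $\{\tfrac{\p}{\p v},\tfrac{\p}{\p\b v}\}$ it acts by the rank-one Hermitian matrix $\tfrac{\rho^2}{2\lambda_0^2}\left(\begin{smallmatrix}|u^v_{\b z}|^2 & -u^v_zu^v_{\b z}\\ -\o{u^v_zu^v_{\b z}} & |u^v_z|^2\end{smallmatrix}\right)$, whose nonzero eigenvalue is your scalar but whose positivity is the perfect-square inequality $(\mc{R}X,X)=\tfrac{\rho^4}{2\lambda_0^2}\,|f_1\o{u^v_{\b z}}-f_2u^v_z|^2\ge 0$, i.e. the paper's \eqref{R positive}; ``nonnegative scalar plus self-adjoint cross terms'' does not by itself give a sign. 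Both points are fixable in a few lines, but they must be carried out rather than asserted.
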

\begin{proof}
	$\mc{J}$ is real since 
	\begin{align*}
	\o{\mc{J}}&=	-\frac{1}{\lambda_0^2}\n_{\b{z}}	\n_{z}	-\frac{1}{\lambda_0^2}R(\bullet,\frac{\p u}{\p \b{z}})\frac{\p u}{\p z}\\
	&=-\frac{1}{\lambda_0^2}\n_{z}	\n_{\b{z}}-\frac{1}{\lambda_0^2}R(\frac{\p u}{\p\b{z}},\frac{\p u}{\p z})-\frac{1}{\lambda_0^2}R(\bullet,\frac{\p u}{\p \b{z}})\frac{\p u}{\p z}\\
	&=-\frac{1}{\lambda_0^2}\n_{z}	\n_{\b{z}}-\frac{1}{\lambda_0^2}R(\bullet,\frac{\p u}{\p z})\frac{\p u}{\p \b{z}}=\mc{J}.
	\end{align*}
For any smooth vector fields $X$ and $Y$ of $u^*T_{\mb{C}}S$,
\begin{align*}
\langle\mc{J}X,Y\rangle&=-\int_{\Sigma}	(\n_{\b{z}}	\n_{z}X+R(X,\frac{\p u}{\p \b{z}})\frac{\p u}{\p z},Y)idz\wedge d\b{z}\\
&=\int_{\Sigma}	(	\n_{z}X,\n_{z}Y)+R(X,\frac{\p u}{\p\b{z}},\frac{\p u}{\p z}, \b{Y})\frac{i}{2}dz\wedge d\b{z}\\
&=-\int_{\Sigma}(X, \n_{\b{z}}	\n_{z}Y+R(Y,\frac{\p u}{\p \b{z}})\frac{\p u}{\p z})\frac{i}{2}dz\wedge d\b{z}\\
&=\langle X,\mc{J}(Y)\rangle,
\end{align*}
which implies $\mc{J}$ is symmetric. Here we used the notation $$R(X,Y,Z,\b{W}):=-(R(X,Y)Z,W)$$ for any four vectors $X,Y,Z,W$ of $u^*T_{\mb{C}}S$. Moreover, if $X=Y=f_1\frac{\p}{\p v}+f_2\frac{\p}{\p\b{v}}$, then 
\begin{align*}
	\langle\mc{J}X,X\rangle=\langle\o{\mc{J}}X,X\rangle=\int_{\Sigma}	(	\n_{z}X,\n_{z}X)+R(X,\frac{\p u}{\p\b{z}},\frac{\p u}{\p z}, \b{X})\frac{i }{2}dz\wedge d\b{z}\geq 0,
\end{align*}
where the last inequality follows from 
\begin{align}\label{R positive}
	R(X,\frac{\p u}{\p\b{z}},\frac{\p u}{\p z}, \b{X})=\frac{\rho^4}{2}(|f_1|^2|u^v_z|^2+|f_2|^2|u^v_{\b{z}}|^2-2\text{Re}(\o{f_1}f_2u^v_zu^v_{\b{z}}))\geq 0.
\end{align}
So the operator $J$ is semi-positive. Lastly, since $\mc{J}=\o{\mc{J}}$, so
\begin{align*}
\mc{J}\left(\frac{\p u}{\p t}\right)&= \mc{J}(u^v_t\frac{\p}{\p v})+\mc{J}(\o{u^v_t}\frac{\p}{\p\b{v}})=2\Re\left(\mc{J}(u^v_t\frac{\p}{\p v})\right)\\
&=2\text{Re}\left(-\frac{1}{\lambda_0^2}(\n_z\n_{\b{z}}u^v_t+R^v_{vtz}u^v_{\b{z}})\frac{\p }{\p v}\right)\\
&=-2\text{Re}\left(\frac{1}{\lambda_0^2}\n_t\n_zu^v_{\b{z}}\frac{\p }{\p v}\right)
=-2\text{Re}\left(\frac{1}{\lambda_0^2}\frac{\p}{\p t}\n_zu^v_{\b{z}}\frac{\p }{\p v}\right)\\
&=-2\text{Re}\left(\frac{\b{q}}{\lambda_0^4}\n_z^F u^v_z\frac{\p}{\p v}+\frac{\b{q}}{\lambda_0^4}\o{\n^F_{\b{z}}u^v_{\b{z}}}\frac{\p}{\p\b{v}}\right)\\
&=-\text{Re}\left(\frac{2\b{q}}{\lambda_0^4}\n^{u^*T_{\mb{C}}S\otimes K_\Sigma}_z\frac{\p u}{\p z}\right),
\end{align*}
where the third equality holds since 
\begin{align*}
\Re \left(R(u^v_t\frac{\p}{\p v},\frac{\p u}{\p z})\frac{\p u}{\p\b{z}}	\right)&=\Re\left(R^v_{vv\b{v}}u^v_t\o{u^v_{\b{z}}}u^v_{\b{z}}\frac{\p}{\p v}+R^{\b{v}}_{\b{v}v\b{v}}u^v_t\o{u^v_{\b{z}}}\o{u^v_z}\frac{\p}{\p\b{v}}\right)\\
&=\Re\left(R^v_{vv\b{v}}u^v_t\o{u^v_{\b{z}}}u^v_{\b{z}}\frac{\p}{\p v}-R^{v}_{vv\b{v}}\o{u^v_t}u^v_{\b{z}}u^v_z\frac{\p}{\p v}\right)\\
&=\Re\left(R^v_{vtz}u^v_{\b{z}}\frac{\p}{\p v}\right),
\end{align*}
and the fifth equality holds since $\n_zu^v_{\b{z}}=0$ at $t=0$, the sixth equality follows from \eqref{2.15}.
 The proof is complete.
\end{proof}
Substituting \eqref{2.0}, \eqref{2.20} and \eqref{2.21} into \eqref{second2} obtain
\begin{thm}\label{thm1}
The second derivative of energy function is given by
\begin{align}\label{2.22}
\frac{1}{2}\frac{d^2 E(t)}{dt^2}|_{t=0}	=\int_{\Sigma}|du|^2\frac{|q|^2}{\lambda_0^4}d\mu_{g_0}-\left\langle \mc{J}\left(\frac{\p u}{\p t}\right),\frac{\p u}{\p t}\right\rangle,
\end{align}
	where  $\mc{J}(\frac{\p u}{\p t})=-\text{Re}(\frac{2\b{q}}{\lambda_0^4}\n_z^{u^*T_{\mb{C}}S\otimes K_\Sigma}\frac{\p u}{\p z})$.
\end{thm}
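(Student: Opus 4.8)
The plan is to read off \eqref{2.22} directly from the expression \eqref{second2} already derived for $\tfrac12\,d^2E(t)/dt^2|_{t=0}$, feeding in the pointwise formulas of Remark \ref{rem1} and the Jacobi-operator identity of Proposition \ref{prop1}; no new analytic input is needed.

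First I would rewrite the first summand on the right-hand side of \eqref{second2}. By \eqref{2.0} one has $|du|^2=\tfrac{2\rho^2}{\lambda_0^2}(|u^v_z|^2+|u^v_{\b z}|^2)$ and $d\mu_{g_0}=\lambda_0^2\tfrac{i}{2}dz\wedge d\b z$, so that
\[
|du|^2\,\frac{|q|^2}{\lambda_0^4}\,d\mu_{g_0}=\rho^2\big(|u^v_z|^2+|u^v_{\b z}|^2\big)\,\frac{|q|^2}{\lambda_0^4}\,i\,dz\wedge d\b z ,
\]
and hence $\int_\Sigma \rho^2(|u^v_z|^2+|u^v_{\b z}|^2)\tfrac{|q|^2}{\lambda_0^4}\,i\,dz\wedge d\b z=\int_\Sigma |du|^2\tfrac{|q|^2}{\lambda_0^4}\,d\mu_{g_0}$. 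This turns the first term of \eqref{second2} into the first term of \eqref{2.22}.

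Next I would treat the second summand of \eqref{second2}, namely $\text{Re}\int_\Sigma \rho^2\,\o{u^v_t}\big(\tfrac{\b q}{\lambda_0^2}\n^F_z u^v_z+\tfrac{q}{\lambda_0^2}\n^F_{\b z}u^v_{\b z}\big)\,i\,dz\wedge d\b z$. The computation \eqref{2.20} identifies this with $\big\langle \text{Re}\big(\tfrac{2\b q}{\lambda_0^4}\n^{u^*T_{\mb C}S\otimes K_\Sigma}_z\tfrac{\p u}{\p z}\big),\tfrac{\p u}{\p t}\big\rangle$ for the global inner product \eqref{HI2}. Proposition \ref{prop1} asserts $\mc J(\tfrac{\p u}{\p t})=-\text{Re}\big(\tfrac{2\b q}{\lambda_0^4}\n^{u^*T_{\mb C}S\otimes K_\Sigma}_z\tfrac{\p u}{\p z}\big)$, so this summand equals $-\langle \mc J(\tfrac{\p u}{\p t}),\tfrac{\p u}{\p t}\rangle$.

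Adding the two contributions yields exactly \eqref{2.22}, with $\mc J(\tfrac{\p u}{\p t})$ as stated. Since \eqref{second2}, \eqref{2.0}, \eqref{2.20}, \eqref{2.21} and Proposition \ref{prop1} are all already in hand, there is no real obstacle; the proof is bookkeeping. The only point demanding attention is the consistent handling of normalizations — the $\tfrac{i}{2}dz\wedge d\b z$ of \eqref{HI2} against the $i\,dz\wedge d\b z$ occurring in \eqref{second2}, and the factor $\tfrac{2\b q}{\lambda_0^4}$ against $\tfrac{\b q}{\lambda_0^2}$ — so that the pairing and the volume element line up correctly when one passes between \eqref{second2} and \eqref{2.22}.
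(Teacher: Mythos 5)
Your proposal is correct and follows exactly the paper's own derivation: the paper obtains Theorem \ref{thm1} precisely by substituting \eqref{2.0}, \eqref{2.20} and Proposition \ref{prop1} into \eqref{second2}, which is the bookkeeping you describe, including the normalization checks between $\frac{i}{2}dz\wedge d\b{z}$ and $i\,dz\wedge d\b{z}$.
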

\begin{rem}
Note that in \cite[(8), (9)]{Toledo}, Toledo also obtained two expressions on the second derivative of energy function involving the first and second derivatives of complex structure $\frac{\p J}{\p t}$, $\frac{\p^2 J}{\p t^2}$.
\end{rem}
For the second term in the RHS of (\ref{2.22}), it is independent of the  solution  of  $\mc{J}(\bullet)=-\text{Re}(\frac{2\b{q}}{\lambda_0^4}\n_z^{u^*T_{\mb{C}}S\otimes K_\Sigma}\frac{\p u}{\p z})$. More precisely,
\begin{prop}
For any  solution $V\in A^0(\Sigma, u^*T_{\mb{C}}S)$ of the equation
\begin{align}\label{Jacobi equation}
	\mc{J}(V)=-\text{Re}\left(\frac{2\b{q}}{\lambda_0^4}\n_z^{u^*T_{\mb{C}}S\otimes K_\Sigma}\frac{\p u}{\p z}\right),
\end{align}
we have 
\begin{align*}
\left\langle\mc{J}(V), V\right\rangle=\left\langle \mc{J}\left(\frac{\p u}{\p t}\right),\frac{\p u}{\p t}\right\rangle.
\end{align*}
\end{prop}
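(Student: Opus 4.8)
The plan is to deduce this from two facts already in hand: that the Jacobi operator $\mc{J}$ is self-adjoint for the global inner product $\langle\cdot,\cdot\rangle$ (Proposition \ref{prop1}), and that $\frac{\p u}{\p t}$ is \emph{itself} a solution of \eqref{Jacobi equation} (also Proposition \ref{prop1}). Abbreviate the right-hand side of \eqref{Jacobi equation} by $\xi:=-\text{Re}\big(\frac{2\b{q}}{\lambda_0^4}\n_z^{u^*T_{\mb{C}}S\otimes K_\Sigma}\frac{\p u}{\p z}\big)\in A^0(\Sigma,u^*T_{\mb{C}}S)$, so that both $V$ and $\frac{\p u}{\p t}$ satisfy $\mc{J}(\,\cdot\,)=\xi$; equivalently the solution set of \eqref{Jacobi equation} is the affine subspace $\frac{\p u}{\p t}+\Ker\mc{J}$. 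The non-uniqueness here — $\mc{J}$ is only semi-positive and may have nontrivial kernel — is exactly what the statement is designed to neutralize.

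First I would observe that $W:=V-\frac{\p u}{\p t}$ lies in $\Ker\mc{J}$, since $\mc{J}(W)=\mc{J}(V)-\mc{J}(\frac{\p u}{\p t})=\xi-\xi=0$. Next, using $\mc{J}(V)=\xi=\mc{J}(\frac{\p u}{\p t})$ and additivity of $\langle\cdot,\cdot\rangle$ in each argument,
\[
\big\langle\mc{J}(V),V\big\rangle-\Big\langle\mc{J}\big(\tfrac{\p u}{\p t}\big),\tfrac{\p u}{\p t}\Big\rangle=\langle\xi,V\rangle-\Big\langle\xi,\tfrac{\p u}{\p t}\Big\rangle=\langle\xi,W\rangle.
\]
Finally I would write $\xi=\mc{J}(\frac{\p u}{\p t})$ once more and move $\mc{J}$ across the pairing using the symmetry $\langle\mc{J}X,Y\rangle=\langle X,\mc{J}Y\rangle$ from Proposition \ref{prop1}, obtaining $\langle\xi,W\rangle=\big\langle\mc{J}(\tfrac{\p u}{\p t}),W\big\rangle=\big\langle\tfrac{\p u}{\p t},\mc{J}(W)\big\rangle=0$. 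This yields $\langle\mc{J}(V),V\rangle=\langle\mc{J}(\frac{\p u}{\p t}),\frac{\p u}{\p t}\rangle$, as claimed. Alternatively one may substitute $V=\frac{\p u}{\p t}+W$ directly into $\langle\mc{J}(V),V\rangle$, expand into four terms, and discard the three involving $\mc{J}(W)=0$ after a single use of symmetry.

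I do not anticipate a genuine obstacle: the argument is pure linear algebra for the self-adjoint operator $\mc{J}$, and the only point to watch is bookkeeping with the Hermitian inner product \eqref{HI2} — one should confirm that subtracting $\langle\xi,\frac{\p u}{\p t}\rangle$ from $\langle\xi,V\rangle$ really produces $\langle\xi,V-\frac{\p u}{\p t}\rangle$ (immediate, since complex conjugation is additive) and that the symmetry identity of Proposition \ref{prop1} is invoked in the Hermitian, not bilinear, sense. Both are routine, so the proof should be very short.
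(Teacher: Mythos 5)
Your proof is correct and is essentially the paper's own argument: the paper likewise notes that any two solutions of \eqref{Jacobi equation} (in particular $V$ and $\frac{\p u}{\p t}$, the latter by Proposition \ref{prop1}) differ by an element of $\Ker\mc{J}$, and then uses the symmetry of $\mc{J}$ to equate the two pairings. Your bookkeeping with $W=V-\frac{\p u}{\p t}$ is just a slight rephrasing of the same chain of equalities.
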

\begin{proof}
	Let  $V_1, V_2$ be any two solutions of \eqref{Jacobi equation}, then $\mc{J}(V_1-V_2)=0$. By Proposition \ref{prop1}, $\mc{J}$ is  symmetric, so 
 \begin{align*}
 	\langle\mc{J}(V_1),V_1\rangle=\langle\mc{J}(V_2),V_1\rangle=\langle V_2,\mc{J}(V_1)\rangle=\langle V_2, \mc{J}(V_2)\rangle=\langle \mc{J}(V_2), V_2\rangle,
 \end{align*}
which completes the proof.
\end{proof}
Combining above Proposition and Theorem \ref{thm1} we have
\begin{cor}\label{cor1}
	The second derivative of energy function is given by
\begin{align}\label{2.23}
\frac{1}{2}\frac{d^2 E(t)}{dt^2}|_{t=0}	=\int_{\Sigma}|du|^2\frac{|q|^2}{\lambda_0^4}d\mu_{g_0}-\left\langle \mc{J}\left(V\right),V\right\rangle,
\end{align}
where $V\in A^0(\Sigma, u^*T_{\mb{C}}S)$ is a  solution of \eqref{Jacobi equation}.
\end{cor}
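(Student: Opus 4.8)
The plan is simply to combine Theorem \ref{thm1} with the Proposition immediately preceding this corollary, which asserts that $\langle\mc{J}(V),V\rangle$ is the same for every solution $V$ of the Jacobi equation \eqref{Jacobi equation}. No new computation is needed; the only thing to verify is that the formula of Theorem \ref{thm1} already falls under the hypotheses of that Proposition.

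First I would observe that, by Proposition \ref{prop1}, the variation field $\frac{\p u}{\p t}$ is itself a solution of \eqref{Jacobi equation}. Indeed Proposition \ref{prop1} gives
\[
\mc{J}\left(\frac{\p u}{\p t}\right)=-\text{Re}\left(\frac{2\b{q}}{\lambda_0^4}\n^{u^*T_{\mb{C}}S\otimes K_\Sigma}_z\frac{\p u}{\p z}\right),
\]
which is exactly the right-hand side of \eqref{Jacobi equation}. In particular \eqref{Jacobi equation} is solvable (equivalently, its right-hand side is $L^2$-orthogonal to $\Ker\mc{J}$), so the statement of the corollary is not vacuous, and $V=\frac{\p u}{\p t}$ is one admissible choice.

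Then I would apply the preceding Proposition to the two solutions $V$ (arbitrary) and $\frac{\p u}{\p t}$, obtaining $\langle\mc{J}(V),V\rangle=\langle\mc{J}(\frac{\p u}{\p t}),\frac{\p u}{\p t}\rangle$; the only input is the symmetry of $\mc{J}$ from Proposition \ref{prop1}, since if $V_1,V_2$ both solve \eqref{Jacobi equation} then $\mc{J}(V_1-V_2)=0$ and hence $\langle\mc{J}(V_1),V_1\rangle=\langle\mc{J}(V_2),V_1\rangle=\langle V_2,\mc{J}(V_1)\rangle=\langle V_2,\mc{J}(V_2)\rangle=\langle\mc{J}(V_2),V_2\rangle$. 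Substituting this identity into the formula \eqref{2.22} of Theorem \ref{thm1} yields \eqref{2.23}. There is essentially no obstacle here; the one point worth stating explicitly is that the quantity $\langle\mc{J}(V),V\rangle$ appearing in \eqref{2.23} is well-defined, i.e. independent of the representative $V$, which is precisely the content of the preceding Proposition combined with the existence of the particular solution $\frac{\p u}{\p t}$.
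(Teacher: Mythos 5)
Your proposal is correct and is essentially the paper's own argument: the corollary follows immediately by combining Theorem \ref{thm1} with the preceding Proposition, noting via Proposition \ref{prop1} that $\frac{\p u}{\p t}$ is one solution of \eqref{Jacobi equation}, so $\left\langle \mc{J}(V),V\right\rangle$ is independent of the chosen solution. Nothing further is needed.
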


\section{Convexity of energy function}\label{sec3}

In this section,  we will prove the (strict) convexity of energy function at its critical point and discuss the uniqueness of its critical points.

Denote by $\n^{0,1}$ the $(0,1)$-part of the connection $\n$. We   define a Hermitian pointwise inner product on the line bundle  $\o{K_{\Sigma}}$ by $$( d\b{z},d\b{z}):=\frac{1}{\lambda_0^2}.$$  Combining with \eqref{HI1}, \eqref{HI2}, there is an induced inner product $\langle\cdot,\cdot\rangle$ on the space $A^{0,1}(\Sigma, u^*T_{\mb{C}}S)$. Denote by $(\n^{0,1})^*$ the adjoint operator of $\n^{0,1}$ with respect to $\langle\cdot,\cdot\rangle$. Then the action on the space $A^{0,1}(\Sigma, u^*T_{\mb{C}}S)$, $(\n^{0,1})^*$ is given by
\begin{align*}
(\n^{0,1})^*=-\frac{1}{\lambda_0^2}\n_z.	
\end{align*}
Denote the Hodge-Laplacian of $\n^{0,1}$  by 
\begin{align*}
\Delta^{0,1}:=\n^{0,1}(\n^{0,1})^*+(\n^{0,1})^*\n^{0,1}.	
\end{align*}
In terms of $\Delta^{0,1}$
the Jacobi operator $\mc{J}$
 on smooth sections of $u^*T_{\mb{C}}S$ is
\begin{align}\label{3.1}
\mc{J}=\Delta^{0,1}+\mc{R},
\end{align}
where $\mc{R}(\bullet):=-\frac{1}{\lambda_0^2}R(\bullet,\frac{\p u}{\p z})\frac{\p u}{\p\b{z}}$. By \eqref{R positive},  $\mc{R}$ is semi-positive. Denote by 
\begin{align*}
\mu=\frac{\b{q}}{\lambda_0^2}d\b{z}\otimes\frac{\p}{\p z} 	
\end{align*}
  the harmonic Beltrimi differential associated to the holomorphic quadratic form $qdz^2$. Then 
  \begin{align*}
  i_{\mu}du=\frac{\b{q}}{\lambda_0^2}d\b{z} i_{\frac{\p}{\p z}}(du)=\frac{\b{q}}{\lambda_0^2}d\b{z}\otimes\frac{\p u}{\p z},
  \end{align*}
and using \eqref{2.16}
\begin{align}\label{3.6}
(\n^{0,1})^*i_{\mu}du=-\frac{1}{\lambda_0^2}\n_z(\frac{\b{q}}{\lambda_0^2}\frac{\p u}{\p z})=-\frac{\b{q}}{\lambda_0^4}\n_z^{u^*T_{\mb{C}}S\otimes K_{\Sigma}}\frac{\p u}{\p z}.	
\end{align}
From \eqref{3.1} and noting $\mc{R}$ is semi-positive, $\text{Ker}\mc{J}\subset \text{Ker}\Delta^{0,1}$, so 
\begin{align}\label{3.66}(\n^{0,1})^*i_{\mu}du\in (\text{Ker}\Delta^{0,1})^{\perp}\subset (\text{Ker}\mc{J})^{\perp}.\end{align}
Since $\mc{J}=\Delta^{0,1}+\mc{R}$ is a symmetric elliptic partial differential operator, so there exists the Green operator $\mc{J}^{-1}$ satisfies 
$$V=P_{\text{Ker}\mc{J}}(V)+\mc{J}\circ \mc{J}^{-1}(V)$$
for any $V\in A^0(\Sigma, u^*T_{\mb{C}}S)$; see e.g. \cite[Page 450, Corollary]{Kodaira}. Here $P_{\text{Ker}\mc{J}}$ denotes the orthogonal projection to $\text{Ker}\mc{J}$. From \eqref{3.66}, 
there is a unique solution $\mc{J}^{-1}((\n^{0,1})^*i_{\mu}du)\in (\text{Ker}\mc{J})^{\perp}$ to the equation $\mc{J}(\bullet)=(\n^{0,1})^*i_{\mu}du$. Let
\begin{align*}
V=	\mc{J}^{-1}((\n^{0,1})^*i_{\mu}du)+\o{\mc{J}^{-1}((\n^{0,1})^*i_{\mu}du)}=\mc{J}^{-1}(\text{Re}(2(\n^{0,1})^*i_{\mu}du)).
\end{align*}
Then $V$ is a solution of (\ref{Jacobi equation}).
Using \eqref{3.6} we find
\begin{aligns}\label{3.5}
\langle\mc{J}(V),V\rangle &=4\langle \text{Re}((\n^{0,1})^*i_{\mu}du),\mc{J}^{-1}(\text{Re}((\n^{0,1})^*i_{\mu}du))\rangle\\
&=\langle (\n^{0,1})^*i_{\mu}du+\o{(\n^{0,1})^*i_{\mu}du},\mc{J}^{-1}((\n^{0,1})^*i_{\mu}du)+\o{\mc{J}^{-1}((\n^{0,1})^*i_{\mu}du)}\rangle\\
&=2\langle (\n^{0,1})^*i_{\mu}du,\mc{J}^{-1}((\n^{0,1})^*i_{\mu}du)\rangle\\
&\quad+ 2\text{Re}\langle (\n^{0,1})^*i_{\mu}du,\mc{J}^{-1}(\o{ (\n^{0,1})^*i_{\mu}du})\rangle.
\end{aligns}
On the other hand, 
\begin{align}\label{3.2}
	\langle (\n^{0,1})^*i_{\mu}du,\mc{J}^{-1}((\n^{0,1})^*i_{\mu}du)\rangle\leq \langle (\n^{0,1})^*i_{\mu}du,(\Delta^{0,1})^{-1}((\n^{0,1})^*i_{\mu}du)\rangle
\end{align}
 with equality  if and only if $\mc{R}(\mc{J}^{-1}(\n^{0,1})^*i_{\mu}du)=0$; see e.g. \cite[(2.48)]{KWZ}. The RHS of the above is
 \begin{aligns}\label{3.3}
 \langle (\n^{0,1})^*i_{\mu}du,(\Delta^{0,1})^{-1}((\n^{0,1})^*i_{\mu}du)\rangle &=\langle i_{\mu}du,\n^{0,1}(\Delta^{0,1})^{-1}((\n^{0,1})^*i_{\mu}du)\rangle	\\
 &=\langle i_{\mu}du,(\Delta^{0,1})^{-1}\n^{0,1}((\n^{0,1})^*i_{\mu}du)\rangle\\
 &=\langle i_{\mu}du,(\Delta^{0,1})^{-1}\Delta^{0,1}(i_{\mu}du)\rangle\\
 &=\langle i_{\mu}du,(\text{Id}-\mb{H})(i_{\mu}du)\rangle\\
 &=\|i_{\mu}du\|^2-\|\mb{H}(i_{\mu}du)\|^2,
 \end{aligns}
where $\mb{H}$ denotes the harmonic projection to the space $\text{Ker}\Delta^{0,1}$, the second equality follows from $(\n^{0,1})^2=0$ on $\Sigma$, the fourth equality holds by the identity $\text{Id}=\mb{H}+(\Delta^{0,1})^{-1}\Delta^{0,1}$. Note that
\begin{multline}\label{3.4}
\|i_{\mu}du\|^2=\|\frac{\b{q}}{\lambda_0^2}d\b{z}\otimes\frac{\p u}{\p z}\|^2\\=	\int_{\Sigma}\rho^2 \frac{|q|^2}{\lambda_0^6}(|u^v_z|^2+|u^v_{\b{z}}|^2)d\mu_{g_0}=\frac{1}{2}\int_{\Sigma}|du|^2\frac{|q|^2}{\lambda_0^4}d\mu_{g_0}.
\end{multline}
Substituting \eqref{3.2}, \eqref{3.3} and \eqref{3.4} into \eqref{3.5} follows
\begin{multline*}
	\langle\mc{J}(V),V\rangle\leq \int_{\Sigma}|du|^2\frac{|q|^2}{\lambda_0^4}d\mu_{g_0}-2\|\mb{H}(i_{\mu}du)\|^2\\+2\text{Re}\langle (\n^{0,1})^*i_{\mu}du,\mc{J}^{-1}(\o{ (\n^{0,1})^*i_{\mu}du})\rangle.
\end{multline*}
By Corollary \ref{cor1} we conclude
\begin{prop}\label{prop2}
The second derivative of energy function at $t=0$ satisfies
\begin{align*}
\frac{d^2 E(t)}{dt^2}|_{t=0}\geq 4\left(\|\mb{H}(i_{\mu}du)\|^2-\text{Re}\langle (\n^{0,1})^*i_{\mu}du,\mc{J}^{-1}(\o{ (\n^{0,1})^*i_{\mu}du})\rangle\right),	
\end{align*}
	with the equality if and only if $\mc{R}(\mc{J}^{-1}(\n^{0,1})^*i_{\mu}du)=0$.
\end{prop}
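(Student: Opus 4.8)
The plan is to read the claim off Corollary~\ref{cor1}, which already records
\[
\tfrac12\,\tfrac{d^2E(t)}{dt^2}\Big|_{t=0}=\int_\Sigma|du|^2\tfrac{|q|^2}{\lambda_0^4}\,d\mu_{g_0}-\langle\mc J(V),V\rangle
\]
for \emph{any} solution $V\in A^0(\Sigma,u^*T_{\mb C}S)$ of the Jacobi-type equation \eqref{Jacobi equation}, so the whole task reduces to producing a good upper bound for $\langle\mc J(V),V\rangle$ for one convenient $V$. First I would observe, via \eqref{3.6}, that the right-hand side of \eqref{Jacobi equation} equals $\mathrm{Re}\big(2(\n^{0,1})^*i_\mu du\big)$, and that \eqref{3.66} — a consequence of $\mc R\ge0$, which forces $\Ker\mc J\subseteq\Ker\Delta^{0,1}$ — gives $(\n^{0,1})^*i_\mu du\in(\Ker\mc J)^\perp$. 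Since $\mc J=\Delta^{0,1}+\mc R$ is elliptic, symmetric and semi-positive by Proposition~\ref{prop1}, its Green operator $\mc J^{-1}$ is defined on $(\Ker\mc J)^\perp$, and I would take $V=\mc J^{-1}\big(\mathrm{Re}(2(\n^{0,1})^*i_\mu du)\big)$.

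With this $V$, I would expand $\langle\mc J(V),V\rangle=4\langle\mathrm{Re}((\n^{0,1})^*i_\mu du),\mc J^{-1}\mathrm{Re}((\n^{0,1})^*i_\mu du)\rangle$ into a ``diagonal'' piece $2\langle(\n^{0,1})^*i_\mu du,\mc J^{-1}(\n^{0,1})^*i_\mu du\rangle$ and a ``mixed'' piece $2\,\mathrm{Re}\langle(\n^{0,1})^*i_\mu du,\mc J^{-1}(\o{(\n^{0,1})^*i_\mu du})\rangle$, as in \eqref{3.5}; the mixed piece is carried through untouched and reappears verbatim in the final bound, so it plays no role in the inequality. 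The inequality comes entirely from comparing the diagonal piece with the $\Delta^{0,1}$-version: since $\mc R\ge0$ one has $\mc J^{-1}\le(\Delta^{0,1})^{-1}$ as quadratic forms on $(\Ker\Delta^{0,1})^\perp$, which is \eqref{3.2}, with equality precisely when $\mc R\big(\mc J^{-1}(\n^{0,1})^*i_\mu du\big)=0$. I would then simplify the $\Delta^{0,1}$-side by the Hodge-theoretic identities in \eqref{3.3}: $(\Delta^{0,1})^{-1}$ commutes with $\n^{0,1}$ (because $(\n^{0,1})^2=0$), $\n^{0,1}(\n^{0,1})^*i_\mu du=\Delta^{0,1}i_\mu du$ (the term $(\n^{0,1})^*\n^{0,1}i_\mu du$ vanishing since $A^{0,2}(\Sigma,\,\cdot\,)=0$ on a Riemann surface), and $(\Delta^{0,1})^{-1}\Delta^{0,1}=\mathrm{Id}-\mb H$, which turns the diagonal bound into $\|i_\mu du\|^2-\|\mb H(i_\mu du)\|^2$. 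A direct computation with the inner products \eqref{HI1}--\eqref{HI2} gives $\|i_\mu du\|^2=\tfrac12\int_\Sigma|du|^2\tfrac{|q|^2}{\lambda_0^4}\,d\mu_{g_0}$, as in \eqref{3.4}; the factor $2$ in \eqref{3.5} then makes $2\|i_\mu du\|^2$ equal to exactly the first term of the Corollary~\ref{cor1} formula, so these cancel and leave precisely the asserted lower bound, with equality iff equality held in \eqref{3.2}.

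The one step that is not just bookkeeping, and the place I would be most careful, is the operator comparison $\mc J^{-1}\le(\Delta^{0,1})^{-1}$ together with its equality case: it rests on the inclusion $\Ker\mc J\subseteq\Ker\Delta^{0,1}$ (which holds because $\mc JV=0$ forces $\langle\Delta^{0,1}V,V\rangle+\langle\mc RV,V\rangle=0$, hence both terms vanish) and on the standard variational comparison of Green operators of $\Delta^{0,1}$ and $\Delta^{0,1}+\mc R$, for which I would invoke \cite[(2.48)]{KWZ}. The algebraic expansion \eqref{3.5}, the commutation and Hodge identities \eqref{3.3}, and the norm identity \eqref{3.4} are then all routine, so the proof is essentially an assembly of facts already established above together with this single comparison estimate.
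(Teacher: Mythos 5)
Your proposal is correct and follows essentially the same route as the paper: the same choice $V=\mc J^{-1}\bigl(\mathrm{Re}(2(\n^{0,1})^*i_\mu du)\bigr)$ justified by $\Ker\mc J\subset\Ker\Delta^{0,1}$, the same splitting into diagonal and mixed terms, the comparison $\mc J^{-1}\le(\Delta^{0,1})^{-1}$ from \cite[(2.48)]{KWZ} with the same equality condition, and the identical Hodge-theoretic reduction to $\|i_\mu du\|^2-\|\mb H(i_\mu du)\|^2$ combined with the norm identity \eqref{3.4} and Corollary \ref{cor1}. No gaps.
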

Now we begin to prove our main theorem, stated
as Theorem \ref{thm0.1} in  Introduction.
\begin{thm}\label{thm2}
If $t_0\in\mc{T}$ is a critical point of energy function, then the energy fucntion is convex at $t_0$. If moreover,  $du_{t_0}$ is never zero, then 	the energy function is strictly convex at $t_0$.
\end{thm}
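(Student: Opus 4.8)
The plan is to feed the structural consequences of criticality into Proposition~\ref{prop2}. By Proposition~\ref{prop0}, at a critical point $t_0$ the map $u:=u_{t_0}$ is holomorphic or anti-holomorphic; I would treat the holomorphic case, the other being identical after swapping $u^*K_S^*$ and $\o{u^*K_S^*}$. Then $u^v_{\b{z}}\equiv 0$, so $\frac{\p u}{\p z}=u^v_z\frac{\p}{\p v}$ is a section of the sub-bundle $u^*K_S^*\subset u^*T_{\mb{C}}S$, while $\frac{\p u}{\p\b{z}}=\o{u^v_z}\frac{\p}{\p\b{v}}$ is a section of $\o{u^*K_S^*}$.

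The key observation is that the splitting $u^*T_{\mb{C}}S=u^*K_S^*\oplus\o{u^*K_S^*}$ is respected by all the operators involved. Since $\n$ is the Chern connection of $K_S^*$ together with its conjugate, $\n$, $\n^{0,1}$, hence $\Delta^{0,1}$, are diagonal for this splitting, and \eqref{HI1} makes the two summands orthogonal, so adjoints are diagonal too. Because $R(\frac{\p}{\p v},\frac{\p}{\p v})=0$, the operator $\mc{R}(\bullet)=-\frac{1}{\lambda_0^2}R(\bullet,\frac{\p u}{\p z})\frac{\p u}{\p\b{z}}$ vanishes on $u^*K_S^*$ and preserves $\o{u^*K_S^*}$; hence $\mc{R}$, $\mc{J}=\Delta^{0,1}+\mc{R}$, and its Green operator $\mc{J}^{-1}$ (defined on $(\text{Ker}\,\mc{J})^{\perp}$, which contains $(\n^{0,1})^*i_{\mu}du$ by \eqref{3.66}) are all diagonal. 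Now $i_{\mu}du=\frac{\b{q}}{\lambda_0^2}d\b{z}\otimes\frac{\p u}{\p z}$, and by \eqref{3.6} also $(\n^{0,1})^*i_{\mu}du$, take values in $u^*K_S^*$; therefore $\mc{J}^{-1}\big((\n^{0,1})^*i_{\mu}du\big)\in A^0(\Sigma,u^*K_S^*)$, on which $\mc{R}$ vanishes, so $\mc{R}\big(\mc{J}^{-1}(\n^{0,1})^*i_{\mu}du\big)=0$ and Proposition~\ref{prop2} holds with equality; moreover $\mc{J}^{-1}\big(\o{(\n^{0,1})^*i_{\mu}du}\big)\in A^0(\Sigma,\o{u^*K_S^*})$ is orthogonal to $(\n^{0,1})^*i_{\mu}du$, so the term $\text{Re}\langle(\n^{0,1})^*i_{\mu}du,\mc{J}^{-1}(\o{(\n^{0,1})^*i_{\mu}du})\rangle$ vanishes. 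Proposition~\ref{prop2} then reads $\frac{d^2E(t)}{dt^2}|_{t=0}=4\|\mb{H}(i_{\mu}du)\|^2\ge 0$, which is the claimed convexity.

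For strict convexity I would also assume $du_{t_0}$ is nowhere zero and take $q\not\equiv 0$ (otherwise the geodesic is constant), and show $\mb{H}(i_{\mu}du)\neq 0$; since $\mb{H}(i_{\mu}du)$ is the harmonic representative of the Dolbeault class $[i_{\mu}du]\in H^1(\Sigma,u^*K_S^*)$, it suffices that this class be nonzero. By the harmonicity equation \eqref{harmonic}, $\p u=u^v_z\frac{\p}{\p v}\otimes dz$ is a holomorphic section of $u^*K_S^*\otimes K_{\Sigma}=\text{Hom}(T\Sigma,u^*K_S^*)$, i.e.\ a holomorphic bundle map $\p u\colon T\Sigma\to u^*K_S^*$, and since $du_{t_0}$ never vanishes it is everywhere nonzero, hence an isomorphism of line bundles. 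Since $i_{\mu}du=\p u\circ\mu$ (the bundle map $\p u$ applied to the $T\Sigma$-valued $(0,1)$-form $\mu$), under the induced isomorphism $H^1(\Sigma,T\Sigma)\xrightarrow{\sim}H^1(\Sigma,u^*K_S^*)$ the class $[i_{\mu}du]$ corresponds to $[\mu]$; and as $q\not\equiv 0$, $\mu=\frac{\b{q}}{\lambda_0^2}\frac{d\b{z}}{dz}$ is a nonzero harmonic Beltrami differential, so $[\mu]\neq 0$ in $H^1(\Sigma,T\Sigma)$. Hence $[i_{\mu}du]\neq 0$, $\mb{H}(i_{\mu}du)\neq 0$, and $\frac{d^2E(t)}{dt^2}|_{t=0}>0$.

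The main obstacle is this last point—recognizing $[i_{\mu}du]$ as a nonzero cohomology class. This is precisely where the immersion hypothesis enters essentially (via $\p u$ being a genuine isomorphism rather than merely injective off isolated zeros), and it is what upgrades "$\ge 0$" to "$>0$"; everything else is bookkeeping with the $u^*K_S^*$-decomposition inside the formula of Proposition~\ref{prop2}.
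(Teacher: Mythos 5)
Your proposal is correct and follows essentially the same route as the paper: holomorphy of $u_{t_0}$ (Proposition \ref{prop0}) makes all the operators diagonal with respect to $u^*T_{\mb{C}}S=u^*K_S^*\oplus\o{u^*K_S^*}$, which yields equality in Proposition \ref{prop2} and kills the cross term, giving $\frac{d^2E}{dt^2}|_{t=0}=4\|\mb{H}(i_{\mu}du)\|^2$, and for strictness the nowhere-vanishing holomorphic section $\p u$ of $u^*K_S^*\otimes K_\Sigma$ is exactly the isomorphism $T\Sigma\to u^*K_S^*$ behind the paper's contradiction $\b{\p}Y=\mu$. The differences are only presentational: the paper checks the splitting behaviour on the dense set $\Omega$ through the explicit components $X_1,X_2$ of the Green-operator solution and phrases strictness via the explicit potential $X_1=du(Y)$, whereas you state the diagonality globally and phrase strictness as nonvanishing of the Dolbeault class $[i_{\mu}du]=(\p u)_*[\mu]$.
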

\begin{proof}
If $t_0\in \mc{T}$ is a critical point of energy function, then the associated harmonic map $u$ at $t=0$ is $\pm$ holomorphic by Proposition \ref{prop0}. Without loss of generality, we may assume that $u$ is holomorphic. Denote 
\begin{align*}
	\Omega:=\{p\in \Sigma| u^v_z(p)\neq 0\},
\end{align*}
which is dense in $\Sigma$. In $\Omega$, we have 
\begin{aligns}\label{3.7}
(\n^{0,1})^*i_{\mu}du &=	-\frac{\b{q}}{\lambda_0^4}\n_z^{u^*T_{\mb{C}}S\otimes K_{\Sigma}}\frac{\p u}{\p z}\\
&=-\frac{\b{q}}{\lambda_0^4}\n^{u^*T_{\mb{C}}S\otimes K_{\Sigma}}_z u^v_z \frac{\p}{\p v}\\
&=-\frac{\b{q}}{\lambda_0^4}\frac{\n^{u^*T_{\mb{C}}S\otimes K_{\Sigma}}_z u^v_z}{u^v_z}\cdot u^v_z \frac{\p}{\p v}\\
&=-\frac{\b{q}}{\lambda_0^4}\p_z\log|du|^2\cdot u^v_z \frac{\p}{\p v},
\end{aligns}
where the first equality follows from (\ref{3.6}), the second equality holds since $u$ is holomorphic. We assume that 
\begin{align}\label{3.8}
\mc{J}^{-1}((\n^{0,1})^*i_{\mu}du)=X_1+X_2
\end{align}
 where $X_1$ is a smooth section of $u^*K_S^*$, while $X_2$ is a smooth section of $\o{u^*K_S^*}$. Since $u$ is holomorphic,  on $\Omega$, $$\frac{\p}{\p v}=(u^v_z)^{-1}\frac{\p u}{\p z}.$$ Thus 
\begin{align*}
X_1=X_1^z\frac{\p u}{\p z},\quad X_2=X_2^{\b{z}}\frac{\p u}{\p\b{z}}	
\end{align*}
on $\Omega$ for some local smooth functions $X_1^z, X_2^{\b{z}}$.
By the definition of $\mc{J}$ \eqref{2.21},  we get on $\Omega$
\begin{align*}
\mc{J}(X_1) &=-\frac{1}{\lambda_0^2}\n_z	\n_{\b{z}}(X_1^z\frac{\p u}{\p z})	-\frac{1}{\lambda_0^2}R(X_1^z\frac{\p u}{\p z},\frac{\p u}{\p z})\frac{\p u}{\p\b{z}}\\
&=-\frac{1}{\lambda_0^2}(\n^{ K_{\Sigma}^*}_z	\p_{\b{z}}X_1^z)\frac{\p u}{\p z}-\frac{1}{\lambda_0^2}\p_{\b{z}}X^z_1\n^{u^*K_S^*\otimes K_{\Sigma}}_z\frac{\p u}{\p z}\\
&=-\frac{1}{\lambda_0^2}(\n^{ K_{\Sigma}^*}_z	\p_{\b{z}}X_1^z)u^v_z\frac{\p}{\p v}-\frac{1}{\lambda_0^2}\p_{\b{z}}X^z_1\p_z\log|du|^2\cdot u^v_z \frac{\p}{\p v},\end{align*}
which is a smooth section of $u^*K_S^*$ . Similarly, $\mc{J}(X_2)$ is a section of  $\o{u^*K_S^*}$ on $\Omega$. Let $\mc{J}$ act on both sides of  \eqref{3.8} and noting that $(\n^{0,1})^*i_{\mu}du$ is a section of $u^*K_S$ on $\Omega$, we get
\begin{align}\label{3.9}
\mc{J}(X_2)=0,\quad \mc{J}(X_1)=(\n^{0,1})^*i_{\mu}du	
\end{align}
 on $\Omega$. Since $\o{\Omega}=\Sigma$, so \eqref{3.9} holds on $\Sigma$. Thus 
\begin{align}\label{3.10}
	\langle (\n^{0,1})^*i_{\mu}du,\mc{J}^{-1}(\o{ (\n^{0,1})^*i_{\mu}du})\rangle=\langle \mc{J}(X_1),\o{X_1}+\o{X_2}\rangle=\langle \mc{J}(X_1),\o{X_1}\rangle.
\end{align}
Since on $\Omega$, $\mc{J}(X_1)$ is a section of $u^*K_S^*$, while $\o{X_1}$ is a section of $\o{u^*K_S^*}$, then the pointwise inner product  
\begin{align*}
(\mc{J}(X_1), \o{X_1})\equiv0	
\end{align*}
on $\Omega$, so is on $\Sigma=\o{\Omega}$. Therefore
\begin{align}\label{3.11}
	\langle \mc{J}(X_1),\o{X_1}\rangle=0.
\end{align}
Furthermore $\mc{J}(X_2)=0$, and using $\langle \mc{J}(X_2), X_2\rangle=0$
 we get  $\mc{R}(X_2)=0$. Thus 
\begin{align}\label{3.12}
\mc{R}(\mc{J}^{-1}(\n^{0,1})^*i_{\mu}du)=\mc{R}(X_1+X_2)=\mc{R}(X_1)=-\frac{1}{\lambda_0^2}R(X_1,\frac{\p u}{\p z})\frac{\p u}{\p\b{z}}=0
\end{align}
 since $X_1$ is parallel to $\frac{\p u}{\p z}$ on $\Omega$. By using \eqref{3.10}, \eqref{3.11}, \eqref{3.12}
and Proposition \ref{prop2}, we have
\begin{align}\label{second3}
	\frac{d^2 E(t)}{dt^2}|_{t=0}= 4\|\mb{H}(i_{\mu}du)\|^2\geq 0,
\end{align}
i.e. the energy is convex at the critical point $t_0\in \mc{T}$. If moreover,  $du_{t_0}$ is never zero,  i.e.
at $t=0$, 
\begin{align*}
du=u^v_z dz\otimes \frac{\p}{\p v}+\o{u^v_z} d\b{z}\otimes \frac{\p}{\p \b{v}}
\end{align*}
 is never zero, which is equivalent to $u^v_z$ is also never zero, i.e. $\Omega=\Sigma$.  Then 
$$X_1=X_1^z\frac{\p u}{\p z}=X^z_1 u^v_z\frac{\p}{\p v}$$
on  $\Sigma$. Denote $Y:=X_1^z\frac{\p}{\p z}\in A^0(\Sigma, K_{\Sigma}^*)$, then 
\begin{align*}
X_1=du(Y).	
\end{align*}
Since $\mc{R}(X_1)=0$, so 
\begin{align*}
\Delta^{0,1}(X_1)=\mc{J}(X_1)=(\n^{0,1})^*(i_\mu du),	
\end{align*}
which implies that 
\begin{align*}
\n^{0,1}X_1-i_\mu du\in \text{Ker}(\n^{0,1})^*\cap 	\text{Ker}\n^{0,1}
\end{align*}
is harmonic, so 
\begin{align*}
\n^{0,1}X_1-i_\mu du=\mb{H}(\n^{0,1}X_1-i_\mu du)=-\mb{H}(i_\mu du).	
\end{align*}
Thus $\mb{H}(i_{\mu}du)=0$ if and only if $\n^{0,1}X_1=i_\mu du$, i.e.
\begin{align*}
	(\b{\p}X_1^z)u^v_z\frac{\p}{\p v}=\frac{\b{q}}{\lambda_0^2}d\b{z}u^v_z\frac{\p}{\p v}.
\end{align*}
Since $u^v_z$ is never zero, so $\b{\p}X_1^z=\frac{\b{q}}{\lambda_0^2}d\b{z}$,
 which implies that 
\begin{align*}
\b{\p}Y=	\frac{\b{q}}{\lambda^2_0}d\b{z}\otimes \frac{\p}{\p z}=\mu\in {H}^{0,1}(\Sigma, K_{\Sigma}^*),
\end{align*}
 which means the harmonic Betrimi differential $\mu$ is $\b{\p}$-exact, so $q\equiv 0$, which is a contradiction. Thus, the energy function is strictly convex at $t_0\in \mc{T}$. The proof is complete.
\end{proof}
\begin{rem}
	More generally one may consider the harmonic maps from Riemann surfaces to  a general Riemannian manifold $(N,g)$ with non-positive Hermitian  sectional curvature, i.e. 
	$$R(X,\b{X},Y,\b{Y})\leq 0$$
	for any two complex vectors $X,Y\in T_{\mb{C}}N:=TN\otimes \mb{C}$, $R=\n^2$ denotes the Riemannian curvature tensor, $\n$ is the Levi-Civita connection. Fix a non-trival homotpy class $[u_0]$ with $u_0:\Sigma\to N$,  for each complex structure $t\in \mc{T}$, if the associated harmonic map $u_t$ is unique and homotopic to $u_0$, then we also obtain an energy function $E(t)$  on the Teichm\"uller space $\mc{T}$ of $\Sigma$ as in \eqref{energy function}. In local coordinates, denote the Riemannian metric by  $g=g_{ij}dx^i\otimes dx^j$. Then the first derivative of the energy function is 
	\begin{align*}
\frac{dE(t)}{dt}|_{t=0}
=\Re\int_{\Sigma}g_{ij}u^i_z u^j_z(-\frac{4\b{q}}{\lambda_0^4})d\mu_{g_0}.
\end{align*}
Since $u$ is harmonic, so $(g_{ij}u^i_z u^j_z)dz^2$ is a holomorphic quadratic differential. $(g_{ij}u^i_z u^j_z)dz^2=0$ if and only if $t_0=\Gamma(0)\in\mc{T}$ is a critical point of the energy function.  Similarly, the second derivative of the energy function is given by 
\begin{align*}
	\frac{1}{2}\frac{d^2 E(t)}{dt^2}|_{t=0} &=\int_{\Sigma}|du|^2\frac{|q|^2}{\lambda_0^4}d\mu_{g_0}+\Re\int_{\Sigma}\frac{4\b{q}}{\lambda_0^4}	g_{ij}(\n^{u^*TN\otimes K_\Sigma}_z u^i_z) u^j_td\mu_{g_0}\\
	&=\int_{\Sigma}|du|^2\frac{|q|^2}{\lambda_0^4}d\mu_{g_0}-\left\langle \mc{J}\left(\frac{\p u}{\p t}\right),\frac{\p u}{\p t}\right\rangle,
\end{align*}
	where $\mc{J}$ is given by \eqref{2.21}  and 
	\begin{align*}
	\mc{J}\left(\frac{\p u}{\p t}\right)=\text{Re}\left(\frac{2\b{q}}{\lambda_0^4}\n_z^{u^*TN\otimes K_\Sigma}\frac{\p u}{\p z}\right).	
	\end{align*}
Here the inner product $\langle\cdot,\cdot\rangle$ is given by 
\begin{align*}
\langle X,Y\rangle:=\int_{\Sigma}2g_{ij}X^iY^j d\mu_{g_0}	
\end{align*}
for any two smooth sections $X=X^i\frac{\p}{\p x^i},Y=Y^i\frac{\p}{\p x^i}$ of $u^*TN$. We can also prove Proposition \ref{prop2} in this case as follows. Denote 
$$\Omega:=\left\{p\in \Sigma| \frac{\p u}{\p z}(p)\neq 0\right\}.$$
Since $[u_0]$ is non-trivial, so $du\not\equiv 0$, and also $\frac{\p u}{\p z}\not\equiv 0$. Thus $\Omega\neq \emptyset$. By harmonicity equation, 
 $\o{\Omega}=\Sigma$. In this case, if $t_0$ is a critical point, then $\frac{\p u}{\p z}\perp \frac{\p u}{\p\b{z}}$.
Since $|\frac{\p u}{\p z}|=|\frac{\p u}{\p \b{z}}|$, so $u$ is an immersion on $\Omega$. $u(\Sigma)$ is called a {\it totally geodesic immersed submanifold}  on $\Omega$ if $\n_{u_*(X)}u_*(Y)\in u_*(T_{\mb{C}}\Sigma)$ for any local smooth vector fields  $X,Y$ of $T_{\mb{C}}\Sigma=T\Sigma\otimes\mb{C}$ on $\Omega$, i.e. the second fundamental form of $u(\Sigma)$ vanishes on $\Omega$. Under the assumption of $u(\Sigma)$ is a totally geodesic immersion submanifold almost everywhere and $t_0=\Gamma(0)$ is a critical point, we also have
\begin{align*}
	(\n^{0,1})^*(i_\mu du)=-\frac{2\b{q}}{\lambda_0^4}\p_z\log |du|^2\frac{\p u}{\p z}
\end{align*}
almost everywhere, and thus
\begin{align*}
	\frac{d^2 E(t)}{dt^2}|_{t=0}= 4\|\mb{H}(i_{\mu}du)\|^2\geq 0.
\end{align*}
i.e., the energy function  is convex at the critical point $t_0\in\mc{T}$. Moreover, if $du_{t_0}$ is never zero, the same argument as the proof of Theorem  \ref{thm2}, the energy function
is strictly convex at $t_0\in \mc{T}$.
\end{rem}
As an application of Theorem \ref{thm2}, we have
\begin{cor}\label{cor2}
If $u_0: \Sigma\to S$ is a covering map, then there exists a unique complex structure $t_0\in \mc{T}$ such that the associated harmonic map $u_{t_0}$ is $\pm$ holomorphic, and  
\begin{align*}
E(t)\geq E(t_0)=\text{Area}(\Sigma). 	
\end{align*}
Moreover, the energy density satisfies $\frac{1}{2}|du|^2(t_0)\equiv 1.$  Indeed, the unique hyperbolic metric on $\Sigma$ which
minimizes the energy is the pull-back hyperbolic metric via $u_{t_0}$.
In this case, 
\begin{align*}
\frac{d^2 E(t)}{dt^2}|_{t=0}=4\|\mu\|^2_{\text{WP}}>0.	
\end{align*}

\end{cor}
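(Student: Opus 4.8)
The plan is to combine the existence of an energy minimizer with the strict convexity of Theorem \ref{thm2}. Since $u_0:\Sigma\to S$ is a covering map it is surjective with $\deg u_0\neq0$, and the induced homomorphism $\pi_1(\Sigma)\to\pi_1(S)$ is injective; hence by \cite{SU1,SY} the energy function $E(t)$ attains its minimum at some $t_0\in\mc{T}$, which is in particular a critical point. By Proposition \ref{prop0} a point $t\in\mc{T}$ is a critical point if and only if $u_t$ is $\pm$ holomorphic, so the asserted existence and uniqueness of $t_0$ is exactly the existence and uniqueness of a critical point of $E$.

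The crux of the argument is to show that $du_{t_0}$ is nowhere zero. Being weakly conformal and $\pm$ holomorphic, $u_{t_0}$ is a (possibly branched) holomorphic covering of degree $\deg u_0$, so the Riemann--Hurwitz formula gives $\chi(\Sigma)=\deg u_0\cdot\chi(S)-\sum_i(d_i-1)$ with ramification indices $d_i\geq1$; on the other hand a covering map satisfies $\chi(\Sigma)=\deg u_0\cdot\chi(S)$, which forces every $d_i=1$, i.e. $du_{t_0}$ never vanishes. Theorem \ref{thm2} then yields that $E$ is strictly convex at $t_0$. Since a smooth function on the connected manifold $\mc{T}$ that is strictly convex at each of its critical points can have at most one critical point \cite[Lemma 4.11 (2)]{JE}, the point $t_0$ is the unique critical point, hence the unique minimizer and the unique $t$ for which $u_t$ is $\pm$ holomorphic.

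To identify the minimizing metric and the minimal energy, assume without loss of generality that $u_{t_0}$ is holomorphic. Since $du_{t_0}$ is nowhere zero, $u_{t_0}$ is a local biholomorphism, so the pullback $u_{t_0}^*(\rho^2dvd\b v)=\rho^2(u)|u^v_z|^2\,dzd\b z$ is a smooth conformal metric on $\Sigma_{t_0}$ with Gaussian curvature $-1$; by uniqueness of the hyperbolic metric in a conformal class it coincides with $g_0=\lambda_0^2dzd\b z$, which shows that the energy-minimizing hyperbolic metric is the pullback metric. Taking traces with respect to $g_0$ gives $\tfrac12|du|^2(t_0)=\tfrac12\text{Tr}_{g_0}(g_0)\equiv1$, hence by \eqref{energy function} $E(t_0)=\int_\Sigma1\,d\mu_{g_0}=\text{Area}(\Sigma)$, while $E(t)\geq E(t_0)$ because $t_0$ is the global minimum.

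Finally, the second-derivative formula follows from the proof of Theorem \ref{thm2}. At $t_0$ we have $u^v_{\b z}=0$ and $|du|^2$ constant, so by \eqref{3.7} $(\n^{0,1})^*i_\mu du=-\tfrac{\b q}{\lambda_0^4}\p_z\log|du|^2\cdot u^v_z\tfrac{\p}{\p v}=0$, while $\n^{0,1}i_\mu du=0$ automatically since $\Sigma$ is a Riemann surface; thus $i_\mu du$ is harmonic and $\mb{H}(i_\mu du)=i_\mu du$. Combining \eqref{second3}, \eqref{3.4} and $\tfrac12|du|^2(t_0)\equiv1$,
\[
\frac{d^2E(t)}{dt^2}\Big|_{t=0}=4\|\mb{H}(i_\mu du)\|^2=4\|i_\mu du\|^2=4\int_\Sigma\frac{|q|^2}{\lambda_0^4}\,d\mu_{g_0}=4\|\mu\|^2_{\text{WP}}>0.
\]
The main obstacle is the Riemann--Hurwitz step: without the covering hypothesis the harmonic map at a critical point is only a branched cover, Theorem \ref{thm2} gives merely convexity, and uniqueness of the critical point may genuinely fail; it is precisely the Euler-characteristic identity for coverings that eliminates ramification and thereby upgrades the conclusion to strict convexity and a unique minimizer.
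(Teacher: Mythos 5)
Your proposal is correct, and for most of the statement it follows the paper's own route: properness of $E$ via the injectivity of $(u_0)_*$ and \cite{SU1,SY} gives a minimizing critical point, Proposition \ref{prop0} identifies critical points with $\pm$ holomorphic maps, Riemann--Hurwitz together with $\chi(\Sigma)=\deg u_0\cdot\chi(S)$ kills all ramification so that $du$ never vanishes at any critical point, Theorem \ref{thm2} gives strict convexity there, and \cite[Lemma 4.11 (2)]{JE} plus connectedness of $\mc{T}$ gives uniqueness; the computation $\frac{d^2E}{dt^2}|_{t=0}=4\|\mb{H}(i_\mu du)\|^2=4\|\mu\|^2_{\mathrm{WP}}$ via $(\n^{0,1})^*i_\mu du=0$ is also the paper's. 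Where you genuinely diverge is in establishing $\tfrac12|du|^2(t_0)\equiv 1$, $E(t_0)=\mathrm{Area}(\Sigma)$, and the pull-back--metric statement: the paper first computes $E(t_0)=\mathrm{Area}(\Sigma)$ by the degree/area identity for $\pm$ holomorphic maps, then proves $\tfrac12|du|^2\le 1$ from a Bochner-type formula for $\log(\tfrac12|du|^2)$ and the maximum principle, and forces equality from $E(t_0)=\mathrm{Area}(\Sigma)$, finally reading off $u_{t_0}^*(\rho^2dvd\b v)=g_0$; you instead use that $du_{t_0}$ nowhere zero makes $u_{t_0}$ a local biholomorphism, so the pull-back metric is a conformal metric of curvature $-1$ and must equal $g_0$ by uniqueness of the hyperbolic metric in a conformal class, after which density $\equiv 1$ and $E(t_0)=\mathrm{Area}(\Sigma)$ follow by taking traces. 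Your variant is shorter and avoids the maximum principle, but it leans entirely on unramifiedness, whereas the paper's degree computation of $E(t_0)$ is valid even for branched $\pm$ holomorphic maps (relevant to the authors' hope of extending the corollary to branched coverings). One small presentational point: uniqueness requires strict convexity at \emph{every} critical point, not only at the minimizer $t_0$; your Riemann--Hurwitz argument applies verbatim to any critical point (as the paper states explicitly), so you should phrase that step accordingly.
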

\begin{proof}
If $u_0:\Sigma\to S$ is a covering map, then $u_0$ is a surjective map
with $\deg u_0\neq 0$, and for any $p\in \Sigma$, the induced
homomorphism $(u_0)_*: \pi_1(\Sigma, p)\to \pi_1(S, u_0(p))$ is
injective;  see e.g. \cite[Theorem 11.16]{Lee}. By \cite{SU1, SY}, the
energy function $E(t)$ is proper (see also \cite[Proposition 4.13]{DW}). Thus there exists a critical point $t_0\in \mc{T}$ such that 
$$E(t)\geq E(t_0)$$
for all $t\in\mc{T}$. From Proposition \ref{prop0}, the associated harmonic map $u_{t_0}$ is $\pm$ holomorphic. So
\begin{align*}
E(t_0)&=\int_{\Sigma}\rho^2(|u^v_z|^2+|u^v_{\b{z}}|^2)\frac{i}{2}dz\wedge d\b{z}\\
&=\left|\int_{\Sigma} u^*(\rho^2\frac{i}{2}dv\wedge d\b{v})\right|	
=\left|\deg u\int_S \rho^2\frac{i}{2}dv\wedge d\b{v}\right| \\
&=2\pi|\deg u_0||\chi(S)|
=2\pi|\chi(\Sigma)|\\
&=\text{Area}(\Sigma),
\end{align*}
where the first equality holds by \eqref{energy}, the second equality holds since $u$ is $\pm$ holomorphic, the third equality holds by the definition of degree, the fifth equality follows from the identity 
\begin{align}\label{3.13}
\chi(\Sigma)=\deg u_0\cdot\chi(S) 
\end{align} for covering maps; see e.g. \cite[Theorem 1.1]{KLP}, and the last equality follows from \eqref{Area}.
 
For any critical point $t\in \mc{T}$,
$u_{t}$ is $\pm$ holomorphic, and the Riemann-Hurwitz formula
gives
\begin{align*}
\chi(\Sigma)=\deg u_t\cdot\chi(S)-\sum(d_i-1)=\deg u_0\cdot\chi(S)-\sum(d_i-1),	
\end{align*}
where  $d_i\geq 1$ is the ramification index; see \cite[Theorem 1.2 or (1.1)]{KLP}. Combing with (\ref{3.13}) shows that $d_i=1$, so $\frac{\p u_t}{\p z}$ is never zero,  so is $du_{t}$.  By Theorem \ref{thm2}, we conclude the energy function is strictly convex at its critical points. Thus $E(t)$ is a Morse function with only index zero. From \cite[Lemma 4.11 (2)]{JE} and noting that $\mc{T}$ is connected, the energy function $E(t)$ has a unique critical point $t_0\in\mc{T}$.

Without loss of generality, we may assume $u_{t_0}$ is holomorphic, so
$$\frac{1}{2}|du|^2(t_0)=\frac{\rho^2}{\lambda_0^2}|u^v_z|^2.$$ 
Since $u^v_z$ is local holomorphic and with no zero point so
\begin{align*}
\frac{1}{\lambda_0^2}\p_z\p_{\b{z}}\log (\frac{1}{2}|du|^2)&=\frac{1}{\lambda_0^2}\p_z\p_{\b{z}}\log	(\frac{\rho^2}{\lambda_0^2}|u^v_z|^2)=\frac{1}{\lambda_0^2}\p_z\p_{\b{z}}\log	\frac{\rho^2}{\lambda_0^2}\\
&=\frac{\rho^2}{\lambda_0^2}|u^v_z|^2\cdot \frac{1}{\rho^2}\p_v\p_{\b{v}}\log \rho^2-\frac{1}{\lambda_0^2}\p_z\p_{\b{z}}\log\lambda_0^2\\
&=\frac{1}{2}\left(\frac{1}{2}|du|^2-1\right).
\end{align*}
See \cite[(16)--(19)]{SY0} for a general formula on the Laplacian of
energy density. By maximal principle, i.e., if $z_0$ is a maximum
point for the energy density, then the above is less than zero, hence
$\frac{1}{2}|du|^2-1\leq 0$ for all $z$ in $\Sigma$. But
$E(t_0)=\text{Area}(\Sigma)$ implies that,
$\frac{1}{2}|du|^2(t_0)\equiv 1$. By \eqref{3.7},
$(\n^{0,1})^*i_{\mu}du =0$, i.e. $i_{\mu}du$ is harmonic.
Thus 
\begin{align*}
	\|\mb{H}(i_{\mu}du)\|^2=\|i_{\mu}du\|^2=\frac{1}{2}\int_{\Sigma}|du|^2\frac{|q|^2}{\lambda_0^4}d\mu_{g_0}=\int_{\Sigma}\frac{|q|^2}{\lambda_0^4}d\mu_{g_0}=\|\mu\|^2_{\text{WP}},
\end{align*}
by the definition of Weil-Petersson metric \eqref{WP}.
From \eqref{second3}, we have 
\begin{align*}
\frac{d^2 E(t)}{dt^2}|_{t=0}=4\|\mu\|^2_{\text{WP}}>0.	
\end{align*}
Since the energy density is identically $1$, so the pull-back metric by $u_{t_0}$ is 
\begin{align*}
u^*_{t_0}(\rho^2dvd\bar{v})=\rho^2|u^v_z|^2dzd\bar{z}=\lambda_0^2 dzd\bar{z},	
\end{align*}
which is the unique hyperbolic metric minimizing the energy. The proof is complete.
\end{proof}
\begin{rem}
For any surjective map $u:\Sigma\to S$, by \cite[Lemma 3.6.2]{Jost}, the energy has a uniform lower bound. More precisely,
 $$E(u)\geq \text{Area}(S)$$ with equality if and only if $u$ is $\pm$ holomorphic and $|\deg u|=1$. In fact, 
\begin{align*}
E(u) &=\int_{\Sigma}\rho^2(|u^v_z|^2+|u^v_{\b{z}}|^2)\frac{i}{2}dz\wedge d\b{z}\\
&\geq \left|\int_{\Sigma}\rho^2(|u^v_z|^2-|u^v_{\b{z}}|^2)\frac{i}{2}dz\wedge d\b{z}\right|	\quad \text{with equality iff $u$ is $\pm$ holomorphic}\\
&=\left|\int_{\Sigma}u^*(\rho^2\frac{i}{2}dv\wedge d\b{v})\right|=|\deg u|\text{Area}(S)\\
&\geq \text{Area}(S)\quad \text{with equality iff $|\deg u|=1$}.
\end{align*}
\end{rem}

\begin{rem}
	If the fixed smooth map $u_0$ is identity map, $S=\Sigma$.
        Corollary \ref{cor2} was proved in \cite[Theorem
        3.1.3]{Tromba}. The energy density satisfies
        $\frac{1}{2}|du|^2\geq 1$ for any harmonic map $u$ homotopic
        to identity,  with the equality if and only if  $u$ is
        identity  \cite[Lemma 5.1]{Wolf}.
        Both of the above results were proved  essentially using Schoen-Yau \cite[Theorem 3.1]{SY0} or Sampson \cite[Proposition 1, Theorem 11]{Sampson} that these harmonic maps are orientation preserving diffeomorphisms.  
\end{rem}

\begin{rem}
Following \cite{Lab, Lab1, Lab2}, a Fuchsian representation of $\pi_{1}(\Sigma)$ in $\text{PSL}(n,\mb{R})$ is a representation which factors through the irreducible representation of $\text{PSL}(2,\mb{R})$ in $\text{PSL}(n,\mb{R})$ and a cocompact representation of $\pi_1(\Sigma)$ in $\text{PSL}(2,\mb{R})$. A Hitchin representation is a representation which may be deformed to a Fuchsian representation. The space of Hitchin representation is denoted by 
$$\text{Rep}_H(\pi_1(\Sigma),\text{PSL}(n,\mb{R}))$$
and is called a Hitchin component. In \cite{Hitchin}, 
Hitchin gives explicit parametrisations of Hitchin components. Namely, given a choice of a complex structure $J$ over a given compact surface $\Sigma$, he produces a homeomorphism 
$$H_J: \mc{Q}(2,J)\oplus\cdots\oplus \mc{Q}(n,J)\to
\text{Rep}_H(\pi_1(\Sigma),\text{PSL}(n,\mb{R})),$$ where
$\mc{Q}(p,J)$ denotes the space of holomorphic $p$-differentials on
Riemann surface $(\Sigma,J)$; see also \cite[Section 9.1]{Lab1} for
the construction of $H_J$. For each $J$ and $\rho$,  there exists a
unique (up to isometries) $\rho$-equivariant harmonic map
$f:\wt{\Sigma}\to \text{SL}(n,\mb{R})/\text{SO}(n)$, and thus obtain
an energy function $e_{\rho}(J)$ on Teichm\"uller space
(\cite{Corlette, Donaldson} or \cite[Proposition 5.5.2]{Lab2}).
In particular, the $\rho$-equivariant harmonic map $f$ is always an
immersion
\cite[Theorem 1.1]{Sanders}.
Note that $H_J$ is not mapping class group invariant, so in \cite[Section 9.2]{Lab1}, Labourie defines an equivariant Hitchin map by 
$$H: \mc{E}^{(n)}\to \text{Rep}_H(\pi_1(\Sigma),\text{PSL}(n,\mb{R})),\quad (J,\omega)\mapsto H_J(0,\omega),$$
where $\mc{E}^{(n)}$ is the vector bundle over Teichmu\"uller space
whose fibre above the (isotopy class of the) complex structure $J$
is $$\mc{E}^{(n)}_J:=\mc{Q}(3,J)\oplus\cdots\oplus \mc{Q}(n,J).$$ In
particular,  the Hitchin map is  surjective and the energy function
$e_{\rho}(J)$ is proper \cite{Lab2}. In \cite{Lab}  and \cite[Conjecture 9.2.3]{Lab1} Labourie
conjectured the Hitchin map is a homeomorphism, and this conjecture is
also equivalent to that the energy function $e_{\rho}(J)$ has a unique
critical point for any
Hitchin representation $\rho$.
This is also a motivation for the authors to study the (strict) convexity of energy function at critical points. Our method here seems to give a new proof on Labourie's conjecture for the case of Fuchsian representations.
 Also note that in \cite{Li}, for a fixed  complex structure, Q. Li proved that the energy density satifies $\frac{1}{2}|df|^2\geq 1$ for any Hitchin representation $\rho$ and the equality holds at one point only if $\frac{1}{2}|df|\equiv 1$  and $\rho$ is the Fuchsian representation.
\end{rem}

\end{document}